\pdfoutput=1 % needed for arXiV submission if use microtype package

\documentclass[12pt]{extarticle}
\usepackage[utf8]{inputenc}
\usepackage[T1]{fontenc}

\usepackage{caption}
\oddsidemargin \evensidemargin
\usepackage{float}
\usepackage{resizegather}
\usepackage{amssymb}
\usepackage{pmboxdraw}
\usepackage{amsmath}
\usepackage{bbm}
\usepackage{amsthm}
\usepackage{amssymb}
\usepackage{dsfont}
\usepackage[dvipsnames]{xcolor}
\usepackage{graphicx}
\usepackage{alphabeta}
\usepackage{tikz,pgfplots}
\usepackage{tikz-cd}
\pgfplotsset{compat=1.18}
\usepackage{pgfplots}
\usepackage{comment}
\usepackage[margin=0.5cm]{caption}
\usepackage{subcaption}
\usepackage[all]{xy}
\usepackage[greek,english]{babel}
\usepackage[export]{adjustbox}
\usepackage{hyperref}
\usepackage{blkarray} 
\usepackage{mathrsfs}  
\hypersetup{
    colorlinks=true,
    linkcolor=myblue,
    filecolor=magenta,      
    urlcolor=myblue,
    citecolor=BurntOrange,
}
\usepackage{enumitem}
\usepackage{booktabs}

\definecolor{myblue}{rgb}{0.25,0.45,0.99}
\definecolor{myorange}{rgb}{0.8500, 0.3250, 0.0980}
\definecolor{myyellow}{rgb}{0.9290, 0.6940, 0.1250}
\definecolor{mypurple}{rgb}{0.4940, 0.1840, 0.5560}
\definecolor{mygreen}{rgb}{0.4660, 0.6740, 0.1880}

\usepackage[activate={true,nocompatibility},final,tracking=true,kerning=true,spacing=true,factor=1100,stretch=10,shrink=10]{microtype}

\let\OLDthebibliography\thebibliography
\renewcommand\thebibliography[1]{
  \OLDthebibliography{#1}
  \setlength{\parskip}{0.4pt}
  \setlength{\itemsep}{3.0pt plus 0.3ex}
}

\allowdisplaybreaks

\usepackage{listings}

\newtheorem{theorem}{Theorem}[section]
\newtheorem{algorithm}{Algorithm}
\newtheorem{theorem*}[theorem]{Theorem*}

\newtheorem{lemma}[theorem]{Lemma}
\newtheorem{corollary}[theorem]{Corollary}
\newtheorem{proposition}[theorem]{Proposition}
\newtheorem{assumption}{Assumption}
\newtheorem{thm/conj}[theorem]{Theorem/Conjecture}

\newtheorem{conjecture}[theorem]{Conjecture}

\theoremstyle{definition}
\newtheorem{examplex}[theorem]{Example}
\newenvironment{example}
{\pushQED{\qed}\examplex}
{\popQED\endexamplex}

\newtheorem{definition}[theorem]{Definition}

\theoremstyle{remark}

\renewcommand{\P}{\mathbb{P}}
\newcommand{\C}{\mathbb{C}}
\newcommand{\R}{\mathbb{R}}

\DeclareMathOperator{\Newt}{Newt}
\DeclareMathOperator{\Nef}{Nef}
\DeclareMathOperator{\Cone}{Cone}

\newcommand\restr[2]{{
  \left.\kern-\nulldelimiterspace 
  #1
  \vphantom{\big|} 
  \right|_{#2}
  }}

\usepackage{titling} % decrease space above title
\usepackage{authblk} % For affiliations
\title{\textbf{Positive Charts of Toric Varieties}
}
\author[1]{Veronica Calvo Cortes}
\author[2]{Simon Telen}
\affil[1]{\small MPI MiS, Inselstrasse 22, 04103 Leipzig, Germany, \href{mailto:veronica.calvo@mis.mpg.de}{veronica.calvo@mis.mpg.de}}
\affil[2]{MPI MiS, Inselstrasse 22, 04103 Leipzig, Germany, \href{mailto:simon.telen@mis.mpg.de}{simon.telen@mis.mpg.de}}

\date{}

\begin{document}

\maketitle

\vspace{-1cm}
\begin{abstract}
\noindent  We construct affine charts of a smooth projective toric variety which contain its nonnegative points, and which admit a closed embedding into the total coordinate space of Cox's quotient construction. We show that such positive charts arise from smooth subcones of the nef cone. To each positive chart we associate an algebraic moment map, the fibers of which are the critical points of a monomial function in Cox coordinates. This work provides a toric framework for the theory of $u$-equations in positive geometry.
\end{abstract}

\small
\textbf{Keywords.} Toric varieties, Cox rings, positive geometry, moment maps, binary geometries

\normalsize
\section{Introduction}

The nonnegative part of the $d$-dimensional complex projective space $\mathbb{P}^d$ is given by 
\[ \mathbb{P}^d_{\geq 0} \, = \, \{(u_0: \cdots: u_d) \in \mathbb{P}^d \, : \, u \in \mathbb{R}^{d+1}_{\geq 0}\setminus \{ 0 \} \}. \]
This is a real manifold with corners, isomorphic to a $d$-simplex. None of the standard affine charts $u_i \neq 0$ contains $\mathbb{P}^d_{\geq 0}$. A convenient alternative is the chart $\mathcal{U} = \mathbb{P}^d \setminus V(u_0 + \cdots + u_d) \supset \mathbb{P}^d_{\geq 0}$. The restriction of the quotient representation $\pi: \mathbb{C}^{d+1} \setminus \{0 \} \rightarrow \mathbb{P}^d$ to the hyperplane \[ U \, = \, \{ u \in \mathbb{C}^{d+1} \setminus \{0\} \, : \, u_0 + \cdots + u_d - 1 = 0 \} \]
uniquely picks a set of homogeneous coordinates $\pi^{-1}(p) \cap U$ for each $p \in \mathcal{U}$. In particular, the semi-algebraic subset of $U$ given by $u_i \geq 0$ is identified with $\mathbb{P}^d_{\geq 0}$. Note that $U$ is the image of
\begin{equation} \label{eq:paramYPd} (t_1, \ldots, t_d) \, \longmapsto \, \Big (\frac{t_1}{1 + t_1 + \cdots + t_d}, \,  \ldots, \, \frac{t_d}{1+ t_1 + \cdots + t_d}, \,  \frac{1}{1 + t_1 + \cdots + t_d} \Big ) .\end{equation}
Hence, it admits a positive rational parametrization whose Newton polytope is isomorphic to $\mathbb{P}^d_{\geq 0}$ as a real manifold with corners. Here ``positive'' means that all numerators and denominators have nonnegative coefficients, and the Newton polytope of a rational parametrization is the Minkowski sum of the Newton polytopes of all these polynomials. That is,
\[ \varphi \colon t \, \mapsto \,  \Big (\frac{r_1(t)}{s_1(t)}, \ldots, \frac{r_n(t)}{s_n(t)} \Big)  \; \text{ has Newton polytope } \; {\rm Newt}(\varphi) = \sum_{i = 1}^n ({\rm Newt}(r_i) + {\rm Newt}(s_i))\]
where $r_i$ and $s_i$ are coprime. Our paper extends these constructions to any smooth projective toric variety, providing a positive rational parametrization for its nonnegative part. 

Let $\Sigma$ be a smooth rational fan in $\mathbb{R}^d$. The smooth toric variety $X_\Sigma$ is the GIT quotient
\begin{equation} \label{eq:quotientrep} 
X_\Sigma \, = \, (\mathbb{C}^{\Sigma(1)} \setminus Z(\Sigma)) / G\end{equation}
where $\Sigma(1)$ is the set of rays of $\Sigma$, $Z(\Sigma)$ is a union of coordinate subspaces and the quotient is by the action of the torus $G = {\rm Hom}_\mathbb{Z}({\rm Cl}(X_\Sigma), \mathbb{C}^*)$ \cite{Cox1995}. %In particular, $G$-orbits in $\mathbb{C}^{\Sigma(1)} \setminus Z(\Sigma)$ are in one-to-one correspondence with points in $X_\Sigma$. 
A point $u = (u_\rho)_{\rho \in \Sigma(1)} \in \mathbb{C}^{\Sigma(1)} \setminus Z(\Sigma)$ gives homogeneous coordinates or Cox coordinates for its image under the map
\begin{equation} \label{eq:coxquotient}
\pi: \mathbb{C}^{\Sigma(1)} \setminus Z(\Sigma) \, \rightarrow \, X_\Sigma.
\end{equation}
This is the quotient morphism associated to \eqref{eq:quotientrep}.
For instance, if $X_\Sigma = \mathbb{P}^d$, then $Z(\Sigma) = \{0\}$, $G = \mathbb{C}^*$ and $\pi^{-1}(p) \subset \mathbb{C}^{d+1} \setminus \{0\}$ is the $\mathbb{C}^*$-orbit of homogeneous coordinate choices for~$p$.

The nonnegative part of $X_\Sigma$ consists of all points with nonnegative Cox coordinates: 
\[ (X_\Sigma)_{\geq 0} \, = \, \{ p \in X_\Sigma \, : \, \pi^{-1}(p) \cap \mathbb{R}^{\Sigma(1)}_{\geq 0} \neq \emptyset \}.\]
Let $\Sigma = \Sigma_P$ be the normal fan of a smooth polytope $P$. We have that $(X_\Sigma)_{\geq 0}$ is isomorphic to $P$ as a real manifold with corners \cite[\S 4.2]{Fulton1993}. %A very ample dilation $k \cdot P$ of $P$ gives an embedding $\iota: X_\Sigma \hookrightarrow \mathbb{P}^N$, where $N+1$ is the number of lattice points in $k \cdot P$. The nonnegative part $(X_\Sigma)_{\geq 0}$ is the intersection of $\iota(X_\Sigma)$ with $\mathbb{P}^N_{\geq 0}$. 
One of our main results is a constructive proof of the following theorem, which generalizes the above discussion for $X_\Sigma = \mathbb{P}^d$. 

\begin{theorem} \label{thm:mainintro}
    Let $\Sigma$ be the normal fan of a smooth $d$-dimensional lattice polytope $P \subset \mathbb{R}^d$. There exists an affine variety $U\subset \mathbb{C}^{\Sigma(1)} \setminus Z(\Sigma)$ and an affine open subset $\mathcal{U} \subset X_\Sigma$ such~that
    \begin{enumerate}
    \item $\pi_{|U}: U  \overset{\sim}{\longrightarrow} \mathcal{U}$ is an isomorphism, with $\pi$ as in \eqref{eq:coxquotient},
    \item inverting $\pi_{|U}$ yields a positive rational parametrization $\varphi \colon \mathbb{C}^d \dashrightarrow U$ with $\Sigma_{{\rm Newt}(\varphi)} = \Sigma$,
    \item $\pi(U_{\geq 0}) = (X_\Sigma)_{\geq 0}$, where $U_{\geq 0} = U \cap \mathbb{R}^{\Sigma(1)}_{\geq 0}$.
\end{enumerate}
\end{theorem}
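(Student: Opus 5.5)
Concretely, I would choose a basis $D_1,\dots,D_r$ of ${\rm Pic}(X_\Sigma)\cong{\rm Cl}(X_\Sigma)\cong\mathbb{Z}^r$ (with $r=|\Sigma(1)|-d$) that spans a smooth cone contained in the nef cone. I would take all $D_j$ nef, so that each is basepoint free. When $P$ is smooth, the nef cone is full-dimensional, so such a unimodular basis inside it exists; I expect to refine the nef cone or pick a suitable triangulation to find one. Each $D_j$ corresponds to a polytope $P_j$, and its lattice points $m$ give monomials $u^{\langle m,\cdot\rangle + a_j}$ of degree $[D_j]$ in the Cox ring. Let $f_j(u)$ denote the sum of all these monomials, which is a positive polynomial of degree $[D_j]$. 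Define
\[ U \, = \, \{u\in\mathbb{C}^{\Sigma(1)}\setminus Z(\Sigma) \, : \, f_1(u)=\cdots=f_r(u)=1\},\qquad \mathcal{U} \, = \, X_\Sigma\setminus \textstyle\bigcup_j V(f_j). \]
Because every $V(f_j)$ is the support of an effective divisor and $D=\sum_j D_j$ is ample (after possibly replacing the basis so that the sum lies in the interior), the complement $\mathcal{U}$ is affine. It is the complement of a hyperplane section in the embedding given by $D$. Each $f_j$ is positive on $\mathbb{R}^{\Sigma(1)}_{\geq0}\setminus Z(\Sigma)$ because $D_j$ is basepoint free: for each such point some monomial of $f_j$ is nonzero. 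Hence $\mathcal{U}\supset (X_\Sigma)_{\geq0}$.

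For item 1, I would show that $U$ is closed in $\mathbb{C}^{\Sigma(1)}$, so that it is affine. On $U$ all $f_j=1$, and the $f_j$ do not vanish simultaneously on $Z(\Sigma)$ ... this needs care. The actual claim is that the closure of $U$ avoids $Z(\Sigma)$, which follows because the monomials of $f_j$ generate the irrelevant ideal up to radical when taken together. Next, $G\cong(\mathbb{C}^*)^r$ acts on the tuple $(f_1,\dots,f_r)$ through the characters $[D_1],\dots,[D_r]$. These characters form a $\mathbb{Z}$-basis of the character group ${\rm Cl}(X_\Sigma)$, so the map $G\to(\mathbb{C}^*)^r$, $g\mapsto(\chi_{D_j}(g))_j$, is an isomorphism. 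Therefore every $G$-orbit in $\pi^{-1}(\mathcal{U})$ meets $U$ in exactly one point, and freely. It follows that $\pi^{-1}(\mathcal{U})\cong U\times G$ equivariantly, and $\pi|_U$ is an isomorphism onto $\mathcal{U}$. The smoothness (unimodularity) of the chosen cone is exactly what makes this map on $G$ an isomorphism rather than an isogeny. I expect this to be the main obstacle, together with checking that the chosen basis lies in the nef cone.

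For item 3, take $p\in(X_\Sigma)_{\geq0}$ with a nonnegative lift $u$. Then $f_j(u)>0$, and rescaling by the unique $g\in G$ with $\chi_{D_j}(g)=f_j(u)^{-1}$, which is real positive, keeps the lift nonnegative and places it in $U_{\geq0}$. The converse inclusion is immediate.

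For item 2, parametrize the torus $(\mathbb{C}^*)^d\subset X_\Sigma$ by $t\mapsto u=(t^{v_\rho})$, where $v_\rho$ is the primitive ray generator; this involves Laurent monomials. Then apply the normalizing group element. Writing $\rho$ in terms of the basis gives $[D_\rho]=\sum_j c_{\rho j}[D_j]$ with $c_{\rho j}\in\mathbb{Z}$, and the normalized coordinate is
\[ \varphi_\rho(t) \, = \, t^{v_\rho}\prod_j f_j(t)^{-c_{\rho j}}, \]
where $f_j(t)$ is a positive Laurent polynomial with Newton polytope $P_j$. After clearing monomials this is a positive rational function. Its Newton polytope is a Minkowski sum of points and positive integer combinations of the $P_j$, and this sum contains $\sum_j P_j$ with positive coefficient. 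Since $\sum_j D_j$ is ample, that polytope has normal fan $\Sigma$. A Minkowski sum of polytopes whose normal fans are coarsenings of $\Sigma$, at least one of which has normal fan exactly $\Sigma$, again has normal fan $\Sigma$. This gives $\Sigma_{{\rm Newt}(\varphi)}=\Sigma$.
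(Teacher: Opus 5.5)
Your argument is correct in outline but takes a different route from the paper. The paper defines $U$ as the closure of the image of $\varphi=\phi_{M^{-t}}\circ\Gamma_{f^{-1}}$. It proves $\pi_{|U}\colon U\to\mathcal U$ is an isomorphism by showing that $\mathbb{C}[U]$ and $\mathbb{C}[\mathcal U]$ are the same subalgebra of $\mathbb{C}(t_1,\dots,t_d)$; this uses that the smooth polytope $P$ is very ample, so lattice points of $rP$ are sums of $r$ lattice points of $P$. Item 3 then comes from a limit argument starting in $\mathbb{R}^d_{>0}$.

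You instead cut $U$ out by $f_j=1$ and use that $(f_1,\dots,f_r)\colon\pi^{-1}(\mathcal U)\to(\mathbb{C}^*)^r$ is equivariant along the character map $G\to(\mathbb{C}^*)^r$. That map is an isomorphism because the $[D_j]$ form a $\mathbb{Z}$-basis of ${\rm Cl}(X_\Sigma)$; with $K$ taken from $M^{-1}$ it is literally the identity, since $AK={\rm id}$. This global slice, together with the categorical-quotient property of $\pi$, gives $U\cong\mathcal U$ with no normality input. It also shows exactly where unimodularity is used. Item 3 becomes immediate, since the normalizing element lies in the positive real points of $G$. Your basepoint-freeness argument for $(X_\Sigma)_{\geq 0}\subset\mathcal U$ also supplies a step the paper only asserts.

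The slice gives more than the theorem. Run at the level of rings on the principal open set $\{\prod_j f_j\neq 0\}\subset\mathbb{C}^n$, it yields $\mathbb{C}[u][(\prod_j f_j)^{-1}]\cong(\mathbb{C}[u]/\langle f_1-1,\dots,f_r-1\rangle)\otimes\mathbb{C}[G]$. Hence $\langle f_1-1,\dots,f_r-1\rangle$ is already prime, which is what the paper leaves as Conjecture~\ref{conj: saturation}.

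Two steps need repair.

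First, your reason why $V(f_1-1,\dots,f_r-1)$ misses $Z(\Sigma)$ is wrong if ``taken together'' means the union of the monomials. For $\mathbb{P}^1\times\mathbb{P}^1$ with $f_1=u_1+u_3$ and $f_2=u_2+u_4$, these monomials generate $\langle u_1,u_2,u_3,u_4\rangle$, and $f_1$ does not vanish on $V(u_2,u_4)\subset Z(\Sigma)$. The correct argument is Corollary~\ref{cor: base locus}. The product $\prod_j f_j$ has ample degree $[\sum_j D_j]$, and every Cox monomial of ample degree is divisible by some $u^{\hat\sigma}$. So $\prod_j f_j\in B(\Sigma)$ vanishes on $Z(\Sigma)$, contradicting $\prod_j f_j=1$.

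Second, $t\mapsto(t^{\rho})_{\rho}$ is not a section of $\pi$: composing with $\phi_{F^t}$ gives $t\mapsto t^{FF^t}$. You also dropped the factor in $f_j(u(t))=u(t)^{a_j}f_j(t)$. For $\mathbb{P}^1\times\mathbb{P}^1$ your formula gives $\varphi_3=t_1^{-1}(1+t_1)^{-1}$, which violates $u_1+u_3=1$. Instead write $M^{-1}=(B\;K)$ and take $u_\rho=t^{b_\rho}$, with $b_\rho$ the rows of $B$. Then $FB={\rm id}$ makes this a section, and $u(t)^{a_j}=1$ because $AB=0$. You get $\varphi_\rho=t^{b_\rho}\prod_j f_j^{-K_{\rho j}}$, which is the paper's parametrization, and your Newton polytope argument goes through unchanged.
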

Our title proposes a name for affine varieties $U$ with the properties listed in Theorem \ref{thm:mainintro}.
\begin{definition} \label{def:maindef}
    A \emph{positive chart} of the smooth projective toric variety $X_\Sigma$ is an affine variety $U \subset \mathbb{C}^{\Sigma(1)} \setminus Z(\Sigma)$ such that $U$ and $\mathcal{U} = \pi(U)$  satisfy the assertions of Theorem \ref{thm:mainintro}.
\end{definition}

Notice that point 3 in Theorem \ref{thm:mainintro} implies that $(X_\Sigma)_{\geq 0} \subseteq \mathcal{U}$. An interpretation of point 1 is that $U$ picks a unique set of Cox coordinates $\pi^{-1}(p) \cap U$ for each point $p \in \mathcal{U}$. In Section \ref{sec:constructing}, we show how to compute a positive chart for the toric variety of any smooth polytope $P$. In particular, we construct the parametrization from point 2 explicitly, and we shall derive defining equations for $U \cap (\mathbb{C}^*)^{\Sigma(1)}$. The following will serve as our running example. 

\begin{example}\label{ex: pentagon intro}
We consider the normal fan $\Sigma$ of a pentagon in $\R^2$ as in Figure~\ref{fig: pentagon ex}, and we explicitly describe one of its positive charts.
\begin{figure}[h!]
%\begin{subfigure}{0.48\textwidth}
    \centering
    \begin{tikzpicture}[scale=1.1]
  % Vertices of the pentagon
  \coordinate (A) at (0,0);
  \coordinate (B) at (0,2);
  \coordinate (C) at (2,2);
  \coordinate (D) at (2,1);
  \coordinate (E) at (1,0);

    \node at (0,0) {$\bullet$};
    \node at (0,2) {$\bullet$};
    \node at (2,2) {$\bullet$};
    \node at (2,1) {$\bullet$};
    \node at (1,0) {$\bullet$};

  \node[left] at (0,2) {$(0,2)$};
  \node[right] at (2,2) {$(2,2)$};
  \node[right] at (2,1) {$(2,1)$};
  \node[below] at (1,0) {$(1,0)$};

  % Draw pentagon
  \draw[thick] (A)--(B)--(C)--(D)--(E)--cycle;

  % Axes
  \draw[->] (-0.5,0) -- (2.7,0) node[right] {};
  \draw[->] (0,-0.5) -- (0,2.7) node[above] {};
\end{tikzpicture}
\quad \quad \quad 
%\end{subfigure}
%\begin{subfigure}{0.48\textwidth}
%    \centering
    \begin{tikzpicture}[scale=0.85]
  % Axes
  \draw[->] (-2.2,0) -- (2.2,0) node[right] {};
  \draw[->] (0,-2.2) -- (0,2.2) node[above] {};

  % Rays of the fan (column order)
  \draw[thick,->] (0,0) -- (1.6,0);
  \draw[thick,->] (0,0) -- (0,1.6);
  \draw[thick,->] (0,0) -- (-1.6,0);
  \draw[thick,->] (0,0) -- (0,-1.6);
  \draw[thick,->] (0,0) -- (-1.2,1.2);

  % Labels
  \node[above] at (1.6,0) {$1$};
  \node[right] at (0,1.6) {$2$};
  \node[above] at (-1.6,0) {$4$};
  \node[right] at (0,-1.6) {$5$};
  \node[above] at (-1.2,1.2) {$3$};

  % Origin
  \fill (0,0) circle (1.5pt);
\end{tikzpicture}
%\end{subfigure}
    \caption{A pentagon and its normal fan.}
    \label{fig: pentagon ex}
\end{figure}
The base locus $Z(\Sigma)$ is given by the ideal $\langle u_1u_2u_3, u_1u_2u_5, u_3u_4u_5, u_1u_4u_5, u_2u_3u_4\rangle$ and the group $G\cong (\C^*)^3$ acts on $\mathbb{C}^5 \setminus Z(\Sigma)$ as follows 
\[ (\lambda, \mu, \nu) \cdot (u_1,u_2,u_3,u_4,u_5) \, = \, (\lambda \, u_1, \frac{\nu}{\lambda} \, u_2, \frac{\lambda \mu}{\nu} \, u_3, \frac{\nu}{\mu} \, u_4,\mu \, u_5), \quad  \lambda,\mu,\nu \in \C^*.\] 
We recall how to determine $Z(\Sigma)$ and the action of $G$ in Section \ref{sec:prelim}. The affine variety $U \subset \C^5 \setminus Z(\Sigma)$ is a complete intersection cut out by three equations
$$U = V(u_3u_4+u_1-1, \; u_5 + u_2u_3 -1, \;  u_4u_5 + u_2u_3u_4 +u_1u_2 -1 ).$$
It is obtained as the image of the following positive rational parametrization $\varphi:\mathbb{C}^2 \dashrightarrow U$:
\[
 \left(\frac{t_1}{1 + t_1}, \frac{t_2(1 + t_1)}{1 + t_2 +t_1t_2}, \frac{1 + t_2 + t_1t_2}{(1 + t_1)(1+t_2)},\frac{1 + t_2}{1 + t_2 + t_1t_2},\frac{1}{1 + t_2}\right).
\]
The Newton polytope of this map is a pentagon whose normal fan is $\Sigma$: 
\[ {\rm Newt}(\varphi) \, = \, {\rm Newt}(t_1) + {\rm Newt}(t_2) + 3{\rm Newt}(1+t_1) + 3{\rm Newt}(1+t_2) + 3{\rm Newt}(1 + t_2 + t_1t_2).\]
The open set $\mathcal{U}$ is the complement of a union of three irreducible curves in $X_\Sigma$. These are the closures in $X_\Sigma$ of the curves in $(\mathbb{C}^*)^2$ defined by $1 + t_1 = 0, \, 1 + t_2 = 0, \, 1 + t_2 + t_1t_2 = 0$. 
%Consider the embedding of $X_\Sigma$ into $\P^7$ as the closure of the image of the monomial map
%$$(t_1,t_2) \mapsto [1:t_1:t_2:t_1t_2:t_2^2:t_1^2t_2:t_1t_2^2:t_1^2t_2^2].$$ Denoting the homogeneous coordinates of $\P^7$ as $z_0,\ldots,z_7$ we have that the affine open subset $\mathcal{U}$ is given by taking out a hyperplane from the toric variety $X_\Sigma$, explicitly
%$$ \mathcal{U} = X_\Sigma \setminus V(z_0+z_1+2z_2+3z_3+z_4+z_5+2z_6+z_7)$$ 
\end{example}

In Section \ref{sec:moment} we associate a natural moment map to a positive chart $U$. That is, we construct explicit morphisms $\mu_{U,s}: U \to \C^{\Sigma(1)}$ depending on parameters $s$ which identify $U_{\geq 0}$ with a $d$-dimensional polytope $P(s)$ whose normal fan is $\Sigma_{P(s)} = \Sigma$. We describe the fibers $\mu_{U,s}^{-1}(x)$ as the critical points of a multivalued monomial function $\prod_{\rho \in \Sigma(1)} u_\rho^{x_\rho}$ on $U \cap (\mathbb{C}^*)^{\Sigma(1)}$.

\paragraph{Related work.} While not phrased in the language of homogeneous coordinates on toric varieties, the recipe followed in Section \ref{sec:constructing} to construct $U$ and $\mathcal{U}$ first appeared in the physics literature \cite[Sections 9.5 and 10]{ArkaniHamed2021}. In several examples from physics, corresponding to specific choices of the fan $\Sigma$, the variety $U$ is a binary geometry in the sense of \cite{binaryGeometries}, see Section \ref{sec: examples}. In particular, our paper identifies the dihedral coordinates (or $u$-coordinates) on the moduli space $M_{0,n}$ from \cite[Section 2]{BrownMZVModuli} as Cox coordinates on the toric variety associated with a smooth realization of the associahedron. Example \ref{ex: pentagon intro} appears in \cite{ArkaniHamed2021,binaryGeometries,BrownMZVModuli}, and $U$ is shown to be a partial compactification of $M_{0,5}$ in \cite{BrownMZVModuli}.

The recent paper \cite{had} presents a large family of examples of $U$-varieties arising in a representation-theoretic context. There, the fan $\Sigma$ is the $g$-vector fan of a finite representation type $\mathbb{C}$-algebra, and such an algebra serves as the starting point of the construction of $U$. In contrast, our point of departure is the fan $\Sigma$ itself. We prove Theorem~\ref{thm:mainintro} using only standard techniques from toric geometry; no representation theory enters the argument. It would be interesting to further explore the relationship between these toric and representation-theoretic perspectives. In particular, the homogenization procedure in \cite[Sections~5 and~9]{had} closely parallels our homogenization with respect to the Cox ring of $X_\Sigma$.

\paragraph{Outline. } Section \ref{sec:prelim} recalls some facts from toric geometry and fixes our notation. Section \ref{sec:constructing} contains our main construction of a positive chart for any smooth fan. In particular, it contains a proof of Theorem \ref{thm:mainintro} and Algorithm \ref{algo} describes how to compute a positive chart $U$ from $\Sigma$. In Section \ref{sec: examples} we work out some relevant examples, and we illustrate our Julia code which implements Algorithm \ref{algo} \cite{zenodo}. Section \ref{sec:moment} discusses moment maps of positive charts.

\section{Toric preliminaries} \label{sec:prelim}

This section recalls some concepts from toric and polyhedral geometry, and it fixes our notation. More background and details can be found in the texts \cite{Fulton1993,CoxLittleSchenckToric,Telen2025AppliedToricGeometry}. 

We start from a complete simplicial rational polyhedral fan $\Sigma$ in $\mathbb{R}^d$. The set of $\ell$-dimensional cones of $\Sigma$ is $\Sigma(\ell)$. The fact that $\Sigma$ is \emph{simplicial} means that each $\sigma \in \Sigma(d)$ has $d$ rays. Our fan $\Sigma$ has $n$ rays in total: $\Sigma(1) = \{\rho_1, \ldots, \rho_n\}$. By abuse of notation, we also write $\rho_i \in \mathbb{Z}^d$ for the primitive generator of the ray $\rho_i$, and we collect these vectors in a matrix $F = \begin{pmatrix}
    \rho_1 & \rho_2 & \cdots & \rho_n
\end{pmatrix} \in \mathbb{Z}^{d \times n}$.
We are exclusively interested in the case where $\Sigma = \Sigma_P$ is the normal fan of a $d$-dimensional polytope $P \subset \mathbb{R}^d$. Since $\Sigma$ is simplicial, $P$ is simple, meaning that each of its vertices is contained in precisely $d$ facets. The fact that $\Sigma = \Sigma_P$ implies that $P$ can be written as 
\[ P \, = \, \{ m \in \mathbb{R}^d \, : \, F^t m + a_P \geq 0 \}\]
for a unique integer vector $a_P = (a_{P,1}, \ldots, a_{P,n}) \in \mathbb{Z}^n$. By the inequality $F^tm + a_P \geq 0$ we mean the entry-wise inequality: $\langle \rho_i, m \rangle + a_{P,i} \geq 0, \, i = 1, \ldots, n$. The \emph{type cone} ${\rm Type}(\Sigma)$ of $\Sigma$ is an open convex cone in $\mathbb{R}^n$ consisting of all points $z \in \mathbb{R}^n$ such that the normal fan of 
\[ P_z = \{ m \in \mathbb{R}^d \, : \, F^t m + z \geq 0 \} \] 
is $\Sigma$. Note that $a_P \in {\rm Type}(\Sigma)$. The \emph{deformation cone} ${\rm Def}(\Sigma)$ is the closure of ${\rm Type}(\Sigma)$ in the Euclidean topology on $\mathbb{R}^n$. It has a $d$-dimensional lineality space given by ${\rm im}_{\mathbb{R}} F^t$; the $\mathbb{R}$-span of the rows of $F$. Taking the quotient by this lineality space gives the \emph{nef cone} of $\Sigma$: 
\[ {\rm Nef}(\Sigma) \, = \, {\rm Def}(\Sigma)/{\rm im}_{\mathbb{R}} F^t .\]
The terminology \emph{nef cone} comes from divisor theory on toric varieties. Let $X_\Sigma$ be the normal projective toric variety associated to our fan $\Sigma$. A \emph{torus-invariant divisor} on $X_\Sigma$ is a $\mathbb{Z}$-linear combination of the prime divisors $D_1, \ldots, D_n$ associated with the rays $\rho_1, \ldots, \rho_n$ respectively. These divisors form a free group ${\rm Div}_T(X_\Sigma) \simeq \mathbb{Z}^n$, where the isomorphism represents a torus-invariant divisor $z_1 \, D_1 + \cdots + z_n \, D_n \in {\rm Div}_T(X_\Sigma)$ by its coefficients $z = (z_1, \ldots, z_n) \in \mathbb{Z}^n$. Two such divisors $z, z'$ are \emph{linearly equivalent} if $z - z' = F^t m$ for some $m \in \mathbb{Z}^d$. The divisor class group is the quotient of ${\rm Div}_T(X_\Sigma) \simeq \mathbb{Z}^n$ by linear equivalence: 
\[ {\rm Cl}(X_\Sigma) \,  \simeq \, \mathbb{Z}^n / {\rm im}_{\mathbb{Z}} F^t. \]
A Cartier divisor $z_1 \, D_1 + \cdots + z_n \, D_n \in {\rm Div}_T(X_\Sigma)$ is \emph{numerically effective}, or \emph{nef} for short, if and only if $z \in {\rm Def}(\Sigma) \cap \mathbb{Z}^n$. It is \emph{ample} if and only if $z \in {\rm Type}(\Sigma) \cap \mathbb{Z}^n$, that is, if and only if $\Sigma_{P_z} = \Sigma$. If ${\rm Cl}(X_\Sigma)$ has no torsion, which is an assumption we will need later on (see Example \ref{ex:torsion}), then ${\rm Cl}(X_\Sigma) \simeq \mathbb{Z}^{n-d}$ is a lattice inside $\mathbb{R}^n/ {\rm im}_{\mathbb{R}}F^t \simeq \mathbb{R}^{n-d}$, and ${\rm Nef}(\Sigma)$ is a pointed rational polyhedral cone inside that same vector space. The Picard group ${\rm Pic}(X_\Sigma) \subseteq {\rm Cl}(X_\Sigma)$ is a sublattice of finite index, and ${\rm Nef}(\Sigma) \cap {\rm Pic}(X_\Sigma)$ consists of the nef divisor classes on $X_\Sigma$. Similarly, the ample Cartier divisor classes are the points in ${\rm int}({\rm Nef}(\Sigma)) \cap {\rm Pic}(X_\Sigma)$. We shall mostly assume that $\Sigma$ is a \emph{smooth} fan, which means that for each $\sigma \in \Sigma(d)$ the primitive ray generators of $\sigma$ form a $\mathbb{Z}$-basis for $\mathbb{Z}^d$. This holds if and only if we have the equality ${\rm Cl}(X_\Sigma) = {\rm Pic}(X_\Sigma)$. The next examples illustrate these concepts. 

\begin{example}\label{ex: pentagon 2}
    The ray matrix of the fan $\Sigma$ in Example~\ref{ex: pentagon intro} is $F = \left ( \begin{smallmatrix}
        1 & 0 & -1 & -1 & 0\\ 0 &1 & 1 & 0 & -1 
    \end{smallmatrix} \right)$.
    The class group is generated by $[D_1], \ldots, [D_5]$, modulo the following relations read from the rows of $F$: $[D_1]-[D_3]-[D_4]=0$ and $[D_2]+[D_3]-[D_5]=0$. We deduce that \[ [a_1D_1+a_2D_2+a_3D_3+a_4D_4+a_5D_5]=(a_1+a_3-a_2)[D_3]+(a_1+a_4)[D_4]+(a_2+a_5)[D_5] \] 
    so that $[D_3], [D_4], [D_5]$ generate ${\rm Cl}(X_\Sigma) \simeq \mathbb{Z}^3$ freely. Since $\Sigma$ is smooth we have ${\rm Cl}(X_\Sigma) = {\rm Pic}(X_\Sigma)$. The nef divisor classes are those torus-invariant divisor classes $a_3[D_3]+ a_4[D_4]+a_5[D_5]$ for which $(a_3,a_4,a_5)$ lies in the nef cone of $\Sigma$. This is the smooth rational polyhedral cone ${\rm Nef}(\Sigma) \subset \R^3$ with ray generators $(1,1,0),(0,0,1)$ and $(0,1,1)$. We point out that, in general, even for smooth $\Sigma$, the nef cone ${\rm Nef}(\Sigma) \subset \mathbb{R}^{n-d}$ need not be smooth or simplicial.
\end{example}

\begin{example} \label{ex:P121}
Let $\Sigma$ be the normal fan of a triangle with vertices $(0,0),(0,1)$ and $(2,0)$. It has ray matrix $F = \left ( \begin{smallmatrix}
    1 & 0 & -1 \\ 0 & 1 & -2
\end{smallmatrix} \right)$. The associated toric surface is a weighted projective plane $X_\Sigma=\P(1,2,1)$. The class group ${\rm Cl}(X_\Sigma) \simeq \mathbb{Z}$ is generated by $[D_3]$ since $[a_1D_1+a_2D_2+a_3D_3] = [(a_1+2a_2+a_3)D_3]$. The divisor $D_3$ is \emph{not} a Cartier divisor but $2D_3$ is. Hence, we get ${\rm Pic}(X_\Sigma)\cong 2\mathbb{Z} \subset \mathbb{Z} \cong {\rm Cl}(X_\Sigma)$. The nef cone is the positive real line. We can think of the nef divisors as even lattice points in ${\rm Nef}(\Sigma)$, i.e., nonnegative even multiples of $D_3$. The nonnegative odd multiples of $D_3$ correspond to non-Cartier divisors whose polytope has normal fan $\Sigma$. For instance, the polytope of $D_3 = 0 \, D_1 + 0 \, D_2 + 1 \, D_3$ is \emph{not} a lattice polytope:
\begin{equation} \label{eq:P001} P_{(0,0,1)} \, = \, \{ m \in \mathbb{R}^2 \, : \, m_1 \geq 0, \, m_2 \geq 0, \, -m_1-2m_2 + 1 \geq 0\}. \end{equation}
It is the triangle with vertices $(0,0),(1,0)$ and $(0,1/2)$. This confirms that $D_3$ is not Cartier. However, we have $\Sigma_{P_{(0,0,1)}} = \Sigma$, which explains that $1 \in {\rm Nef}(\Sigma)$.
\end{example}

For any simplicial fan $\Sigma$ in $\mathbb{R}^d$ whose ray matrix has rank $d$, the toric variety $X_\Sigma$ can be realized as a GIT quotient \cite{Cox1995} of a quasi-affine variety $\mathbb{C}^n \setminus Z(\Sigma)$ by the action of a reductive group $G$: $X_\Sigma \simeq (\mathbb{C}^n \setminus Z(\Sigma))/G$. The affine variety $Z(\Sigma)$ is a union of coordinate subspaces, defined by the Stanley-Reisner ideal of the Alexander dual of $\Sigma$. That is, $Z(\Sigma)$ is defined by the monomial ideal $B(\Sigma) = \langle u^{\hat{\sigma}} \, : \, \sigma \in \Sigma \rangle \subset \mathbb{C}[u_1, \ldots, u_n]$, where $u^{\hat{\sigma}}$ is the product of all $u_i$ such that $\rho_i \not \subset \sigma$. The ideal $B(\Sigma)$ is also called the \emph{irrelevant ideal} of $\Sigma$. It remains to define the group $G$ and its action on $\mathbb{C}^n \setminus Z(\Sigma)$. If ${\rm Cl}(X_\Sigma)$ is torsion-free, then $G$ is a torus of dimension $k = n-d$. Let $K \in \mathbb{Z}^{n \times k}$ be a matrix representing $\ker_{\mathbb{Z}} F$, i.e., ${\rm im}_{\mathbb{Z}} \, K = \ker_{\mathbb{Z}}F$. Its rows are denoted by $v_1, \ldots, v_n \in \mathbb{Z}^k$. A point $(\lambda_1, \ldots, \lambda_k) \in (\mathbb{C}^*)^k = G$ acts as follows:
\[ (\lambda_1, \ldots, \lambda_k) \cdot (u_1, \ldots, u_n) \,= \, (\lambda^{v_1} \, u_1, \ldots, \lambda^{v_n} \, u_n), \quad u \in \mathbb{C}^n \setminus Z(\Sigma).\]
\begin{example} \label{ex:Pdcox}
    Let $\Sigma$ be the normal fan of the standard simplex $P \subset \mathbb{R}^d$. Its ray matrix is $F = \begin{pmatrix}
        {\rm id}_{d \times d} & - {\bf 1}
    \end{pmatrix} \in \mathbb{Z}^{d \times (d+1)}$, where ${\bf 1} \in \mathbb{Z}^d$ is the all-ones vector. The associated toric variety is $X_\Sigma = \mathbb{P}^d$. In this case, Cox's quotient construction is the familiar equality $\mathbb{P}^d = (\mathbb{C}^{d+1} \setminus \{ 0 \})/\mathbb{C}^*$ identifying points in $\mathbb{P}^d$ with lines through the origin in $\mathbb{C}^{d+1}$. The irrelevant ideal is $B(\Sigma) = \langle u_1, \ldots, u_{d+1} \rangle$, so that indeed $Z(\Sigma) = \{0\}$. The matrix $K$ has size $(d+1) \times 1$, and its entries are all equal to $1$. The action of $\mathbb{C}^*$ is therefore given by $\lambda \cdot u = (\lambda u_1, \ldots, \lambda u_{d+1})$.  
\end{example}

\begin{example} \label{ex:P1P1cox}
    Consider the normal fan $\Sigma = \Sigma_P$ of the unit square $P = [0,1]^2 \in \mathbb{R}^2$. Its ray matrix is $F = \left ( \begin{smallmatrix}
        1 & 0 & -1 & 0 \\ 0 & 1 & 0 & -1
    \end{smallmatrix} \right )$. The irrelevant ideal $B(\Sigma) = \langle x_1, x_3 \rangle \cap \langle x_2, x_4 \rangle$ defines a union of two coordinate planes $Z(\Sigma) \subset \mathbb{C}^4$. The group $G = (\mathbb{C}^*)^2$ acts on $\mathbb{C}^4 \setminus Z(\Sigma)$ by 
    \[ (\lambda, \mu) \cdot (u_1,u_2,u_3,u_4) \, = \, (\lambda\, u_1, \mu \, u_2, \lambda\, u_3, \mu \, u_4 ).\]
    The quotient $(\mathbb{C}^4 \setminus Z(\Sigma)) /G$ is a familiar description of $X_\Sigma = \mathbb{P}^1 \times \mathbb{P}^1$.
\end{example}

\begin{example}
    The irrelevant ideal $B(\Sigma)$ and the action of $G$ in our running example were identified in Example \ref{ex: pentagon intro}. Recall the matrix $F$ in that example is $F = \left ( \begin{smallmatrix}
        1 & 0 & -1 & -1 & 0 \\ 0 & 1 & 1 & 0 & -1
    \end{smallmatrix}\right) $.
\end{example}

The quotient morphism $\pi: \mathbb{C}^n \setminus Z(\Sigma) \rightarrow (\mathbb{C}^n \setminus Z(\Sigma))/G =  X_\Sigma$ is a toric morphism induced by the $\mathbb{Z}$-linear map $F: \mathbb{Z}^n \rightarrow \mathbb{Z}^d$. In particular, its restriction $\pi_{|({\mathbb{C}^*})^n}:(\mathbb{C}^*)^n \rightarrow (\mathbb{C}^*)^d \subset X_\Sigma$ is given by the monomial map $\phi_{F^t}: u \mapsto (u^{F_{1,:}}, \ldots, u^{F_{d,:}})$, where $F_{i,:}$ is the $i$-th row of $F$.

We continue to assume that ${\rm Cl}(X_\Sigma)$ is torsion free. We identify ${\rm Cl}(X_\Sigma) \simeq \mathbb{Z}^k$ by setting $[D_i] = v_i$ for each torus-invariant prime divisor. Here $v_i$ is the $i$-th row of the $\mathbb{Z}$-kernel matrix $K$ of $F$. Let $S = \mathbb{C}[u_1, \ldots, u_n]$ be the polynomial ring with one variable for each ray in $\Sigma(1)$. The action of $G$ induces a $\mathbb{Z}^k$-grading on $S$ in which $\deg(u_i) = v_i$. We decompose $S$ as 
\begin{equation} \label{eq:grading} S \, = \, \bigoplus_{w \in \mathbb{Z}^k} S_w, \quad \text{where} \quad S_w \, = \, \bigoplus_{b \in \mathbb{N}^n \, : \, K^t b = w} \mathbb{C} \cdot u^b.\end{equation}
Equivalently, for any $w \in \mathbb{Z}^k$ choose $z \in \mathbb{Z}^n$ such that $K^t z = w$. The monomials in $S_w$ are $u^{F^t m + z}$, where $m$ satisfies $F^t m + z \geq 0$, i.e. $ m \in P_{z} \cap \mathbb{Z}^d$. This gives an identification 
\begin{equation} \label{eq:homogenization} \eta_z \, : \, \bigoplus_{m \in P_{z} \cap \mathbb{Z}^d} \mathbb{C} \cdot t^m \, \overset{\sim}{\longrightarrow} \, S_w \quad \text{given by} \quad \sum_{m \in P_{z} \cap \mathbb{Z}^d} c_m \, t^m \, \longmapsto \, \sum_{m \in P_{z} \cap \mathbb{Z}^d} c_m \, u^{F^tm + z}.\end{equation}
for any choice of $z \in \mathbb{Z}^n$ such that $K^t z = w$. Different choices of $z$ correspond to linearly equivalent divisors. The domain of $\eta_z$ is the space of Laurent polynomials in $\mathbb{C}[t_1^{\pm 1}, \ldots, t_d^{\pm 1}]$ whose Newton polytope is contained in $P_z$. Its codomain is $S_{K^tz}$. The map \eqref{eq:homogenization} is called \emph{homogenization}. Note that one obtains $\eta_z(f)$ by multiplying the pullback $\pi_{|(\mathbb{C}^*)^n}^*(f)$ with $u^z$. That is, usually, one can compute $\eta_z(f)$ by replacing $t = \phi_{F^t}(u)$ and taking the numerator of the resulting rational function. This procedure works for the purposes of our paper. The ring $S$, with its $\mathbb{Z}^k$-grading and its irrelevant ideal $B(\Sigma)$, is called the \emph{Cox ring} of~$X_\Sigma$.

\begin{example}
    For $\Sigma$ from Example \ref{ex:Pdcox}, the grading \eqref{eq:grading} is the standard $\mathbb{Z}$-grading of the polynomial ring. The vector $z = (0, \ldots, 0, w)^t$ with $w \in \mathbb{N}$ satisfies $K^t z = w$. The lattice points of $P_z$ are the monomials in $\mathbb{C}[t_1, \ldots, t_d]$ of degree at most $w$. The homogenization map \eqref{eq:homogenization} is the usual homogenization of a degree $w$ polynomial to a form of degree $w$.
\end{example}

\begin{example} \label{ex:homogenizepentagon}
    The $\mathbb{Z}^3$-grading of $S=\C[u_1,u_2,u_3,u_4,u_5]$ in Example \ref{ex: pentagon intro} is given~by \[ K^t= \begin{pmatrix}
        1 & -1 & 1 & 0 & 0 \\ 
        0 & 0 & 1 & -1 & 1 \\ 
        0 & 1 & -1 & 1 & 0
    \end{pmatrix}, \quad \text{e.g. } \deg(u_3u_4) \, = \, \deg(u_1) \, = \, \begin{pmatrix}
        1\\0\\0
    \end{pmatrix}. \] To homogenize, we replace $t_1=\frac{u_1}{u_3u_4}, \; t_2= \frac{u_2u_3}{u_5}$ and take the numerator. For example,
    \begin{align*}
        1+t_1 &\longmapsto u_3u_4+u_1 \in S_{(1,0,0)}  & 1+t_2+t_1t_2 &\longmapsto u_4u_5 + u_2u_3u_4+u_1u_2  \in S_{(0,0,1)}\\
         1+t_2 &\longmapsto u_5+u_2u_3 \in S_{(0,1,0)}& 1+t_1+t_1t_2 &\longmapsto u_3u_4u_5 + u_1u_5+u_1u_2u_3  \in S_{(1,1,0)}. \qedhere
    \end{align*}
\end{example}

\section{Constructing positive charts} \label{sec:constructing}

Fix $k$ Laurent polynomials $f_1, \ldots, f_k \in \mathbb{C}[t_1^{\pm 1}, \ldots, t_d^{\pm 1}]$. Their Newton polytopes are denoted by $P_i = {\rm Newt}(f_i)$, and we assume that the Minkowski sum $P = P_1 + \cdots + P_k$ is $d$-dimensional. Let $\Sigma = \Sigma_P$ be the normal fan of $P$. As above, we store the primitive ray generators of $\Sigma(1)$ in the columns of a matrix $F = \begin{pmatrix}
    \rho_1 & \rho_2 & \cdots & \rho_n
\end{pmatrix} \in \mathbb{Z}^{d \times n}$.
Here $n = |\Sigma(1)|$ is the number of rays of $\Sigma$. Importantly, we make the assumption that $n = d + k$.

Each of the polytopes $P_i$ corresponds to a basepoint free torus-invariant Cartier divisor \[ D_{P_i} \, = \, \sum_{j = 1}^n a_{i,j} \, D_{j} \, \in \, {\rm Div}_T(X_\Sigma), \quad \text{where} \quad  a_{i, j} = -{\rm min}_{m \in P_i} \langle \rho_j, m \rangle\]
and $D_j$ is the torus-invariant prime divisor corresponding to $\rho_j \in \Sigma(1)$. Consider the matrix
\begin{equation}\label{eq: M matrix}
    M \, = \, \begin{pmatrix}
    \rho_1 & \rho_2 & \cdots & \rho_n \\ 
    a_{1,1} & a_{1,2} & \cdots & a_{1,n} \\
    \vdots & \vdots & \vdots & \vdots \\ 
    a_{k,1} & a_{k,2} & \cdots & a_{k,n} 
\end{pmatrix} \, = \, \begin{pmatrix}
    F \\ 
    a_1^t \\ 
    \vdots \\ 
    a_k^t
\end{pmatrix} \, \in \, \mathbb{Z}^{n \times n}.
\end{equation}
Note that this matrix only depends on the Newton polytopes of the $f_i$: its first $d$ rows are the rays in the normal fan of $P = \sum_i P_i$, and its last $k$ rows are the divisors corresponding to $P_i$ on $X_\Sigma$. 
In what follows, we need to assume that $M$ is unimodular, meaning that $\det M = \pm 1$. 
This implies, for instance, that ${\rm Cl}(X_\Sigma)$ is torsion-free, since the lattice generated by the rows of $F$ is a direct summand of $\mathbb{Z}^n$, and ${\rm Cl}(X_\Sigma) = \mathbb{Z}^n / {\rm im}_{\mathbb{Z}} F^t$. 
Here are two concrete examples. 

\begin{example}\label{ex: pentagon 3}
    We saw in Example~\ref{ex: pentagon 2} that the class group in our running example is torsion-free. We have $k = n-d = 3$ and we consider the polynomials $f_1 = 1 + t_1$, $f_2 = 1 + t_2$ and $f_3 = 1 + t_2 + t_1t_2$. This yields a unimodular matrix built as in \eqref{eq: M matrix}: 
    $$M = \begin{pmatrix}
        1 & 0 & -1 & -1 & 0\\ 
            0 &1 & 1 & 0 & -1 \\
            0 & 0 & 1 & 1 & 0\\
            0 & 0 & 0 & 0 & 1\\
            0 & 0 & 0 & 1 & 1 
    \end{pmatrix}. $$
    Notice that these polynomials and their Newton polytopes appeared in Example \ref{ex: pentagon intro}.
\end{example}

\begin{example} \label{ex:torsion}
    The normal fan $\Sigma$ of the diamond with vertices $(1,0), (0,1), (-1,0), (0,-1)$ has ray matrix $F = \left ( \begin{smallmatrix}
        1 & -1 & -1 & 1\\ 1 &1 & -1 & -1 
    \end{smallmatrix} \right)$. The divisor class group of the corresponding toric surface $X_\Sigma$ is isomorphic to $\mathbb{Z}^2 \oplus \mathbb{Z} /2 \mathbb{Z}$. Hence, $F$ cannot be extended to a unimodular matrix.
\end{example}

For convenience, we summarize the assumptions on $f_1, \ldots, f_k$ made so far.
\begin{assumption}\label{assump: on polys}
    The Laurent polynomials $f_1, \ldots, f_k \in \mathbb{C}[t_1^{\pm 1}, \ldots, t_d^{\pm 1}]$ satisfy two conditions:
    \begin{enumerate}
        \item The sum $P = P_1 + \cdots + P_k$, with $P_i = {\rm Newt}(f_i)$, is $d$-dimensional and has $d+k$ facets.
        \item The matrix $M$ built as in \eqref{eq: M matrix} is unimodular, i.e. $\det M = \pm 1$.
    \end{enumerate}
\end{assumption}

For statements about nonnegative points, we shall also assume the following. 
\begin{assumption} \label{assump:nonneg}
    The coefficients of $f_1, \ldots, f_k \in \mathbb{C}[t_1^{\pm 1}, \ldots, t_d^{\pm 1}]$ are real and nonnegative.
\end{assumption}

We already saw that Assumption \ref{assump: on polys} limits us to normal projective toric varieties whose class group has no torsion. We now show a stronger consequence. 

\begin{lemma}\label{lem: smooth}
    If $f_1,\ldots, f_k$ satisfy Assumption~\ref{assump: on polys}, then the toric variety $X_\Sigma = X_{\Sigma_P}$ is smooth.
\end{lemma}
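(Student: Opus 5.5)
The plan is to check the smoothness criterion from Section~\ref{sec:prelim} cone by cone: for every maximal cone $\sigma \in \Sigma(d)$ we show that $\sigma$ has exactly $d$ rays and that their primitive generators form a $\mathbb{Z}$-basis of $\mathbb{Z}^d$. The tool is unimodularity of $M$ from Assumption~\ref{assump: on polys}. The idea is to perform determinant-preserving row operations on $M$, adapted to $\sigma$, which make $M$ block triangular with $F_\sigma$ (the columns of $F$ indexed by rays of $\sigma$) as a diagonal block.

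Fix $\sigma \in \Sigma(d)$. It corresponds to a vertex $v$ of $P = P_1 + \cdots + P_k$, and $v = v_1 + \cdots + v_k$ with $v_i$ the vertex of $P_i$ selected by any vector in the interior of $\sigma$. Since $\sigma$ lies in a cone of the normal fan of each $P_i$ (the normal fan of a Minkowski sum is the common refinement), every ray $\rho_j \subset \sigma$ attains its minimum over $P_i$ at $v_i$. Hence $a_{i,j} = -\langle \rho_j, v_i\rangle$ for all $j$ with $\rho_j \subset \sigma$. Now add to row $d+i$ of $M$ the integer combination $v_i^t F$ of the first $d$ rows; here $v_i \in \mathbb{Z}^d$ because the $f_i$ are Laurent polynomials, so $P_i$ is a lattice polytope. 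This does not change $\det M$. The new $(d+i)$-th row has entries $a_{i,j} + \langle \rho_j, v_i\rangle \ge 0$, and these vanish for every $j$ with $\rho_j \subset \sigma$.

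After permuting the columns so that the rays of $\sigma$ come first, the modified matrix has the block form
\[ \begin{pmatrix} F_\sigma & F_{\hat\sigma} \\ 0 & A' \end{pmatrix}. \]
First, if $\sigma$ had more than $d$ rays, the first $|\sigma| > d$ columns would be vectors supported in the first $d$ coordinates. They would therefore be linearly dependent, contradicting $\det M = \pm 1$. So $|\sigma| = d$, the fan is simplicial, and the blocks are square. Then $\pm 1 = \det M = \pm \det F_\sigma \cdot \det A'$ with both factors integers, which forces $\det F_\sigma = \pm 1$. Thus the generators of $\sigma$ form a $\mathbb{Z}$-basis of $\mathbb{Z}^d$. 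Since every cone of $\Sigma$ is a face of a maximal one, all cones are smooth and $X_\Sigma$ is smooth.

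The main point requiring care is the identification $a_{i,j} = -\langle \rho_j, v_i\rangle$ for all rays of $\sigma$ simultaneously, using one vertex $v_i$ of each summand. This follows because $\Sigma_P$ refines each $\Sigma_{P_i}$, so the whole cone $\sigma$ lies in the normal cone of a single vertex $v_i$ of $P_i$. Everything else is linear algebra over $\mathbb{Z}$.
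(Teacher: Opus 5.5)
Your proof is correct, but it is more elementary than the paper's. The paper argues globally and in two lines. Unimodularity of $M$ means that ${\rm im}_{\mathbb{Z}}F^t$ together with $a_1,\ldots,a_k$ spans $\mathbb{Z}^n$, so the classes $[D_{P_1}],\ldots,[D_{P_k}]$ generate ${\rm Cl}(X_\Sigma)=\mathbb{Z}^n/{\rm im}_{\mathbb{Z}}F^t$. Since these divisors are Cartier, ${\rm Pic}(X_\Sigma)={\rm Cl}(X_\Sigma)$, and smoothness follows from \cite[Proposition 4.2.6]{CoxLittleSchenckToric}.

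You instead work cone by cone with integer row operations, and the two arguments rest on the same mechanism. Your replacement $a_i \mapsto a_i + F^t v_i$ is exactly passing to the local Cartier data of $D_{P_i}$ on the chart of $\sigma$. The paper uses this fact only implicitly, when it asserts that $D_{P_i}$ is Cartier on $X_{\Sigma_P}$ (which needs that $\Sigma_P$ refines $\Sigma_{P_i}$ and that the $v_i$ are lattice points). Your block-determinant step is essentially the relevant direction of the cited proposition, specialized to this setting.

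The paper's route is shorter and conceptual, but it relies on divisor theory and an external reference. Yours is self-contained and makes explicit where lattice-ness of the $P_i$ and the refinement property enter. It also derives simpliciality of $\Sigma_P$ by hand, which the paper's definition of a smooth fan (stated for $\sigma\in\Sigma(d)$ having a $\mathbb{Z}$-basis of ray generators) tacitly presupposes.
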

\begin{proof}
    The condition that $M$ is unimodular implies that the divisor classes $[D_{P_1}], \ldots, [D_{P_k}] \in {\rm Pic}(X_\Sigma)$ associated to the $f_i$ generate ${\rm Cl}(X_\Sigma) = \mathbb{Z}^n / {\rm im}_{\mathbb{Z}} F^t$. Thus, the Picard group equals the class group, which implies that $X_\Sigma$ is smooth by \cite[Proposition 4.2.6]{CoxLittleSchenckToric}.
\end{proof}

Example \ref{ex:singular} will illustrate the case where $X_\Sigma$ is singular but the class group has no torsion. Next, we build a rational variety $U$ using $f_1, \ldots, f_k$ and $M$. %Let $f_*=f_1\cdots f_k$ denote the product of our Laurent polynomials. 
We start from the map 
\[
    \Gamma_{f^{-1}}: (\mathbb{C}^*)^d \setminus V(f_1 \cdots f_k) \rightarrow (\mathbb{C}^*)^n \qquad t \mapsto (t_1, \ldots, t_d,f_{1}(t)^{-1},\ldots, f_k(t)^{-1}).
\]
The image of $\Gamma_{f^{-1}}$ is a very affine variety denoted by $V^\circ \subset (\mathbb{C}^*)^n$. The map $p: V^\circ \to (\mathbb{C}^*)^d \setminus V(f_1 \cdots f_k)$ denotes its inverse: the projection onto the first $d$ coordinates. For any integer matrix $B \in \mathbb{Z}^{p \times q}$, we write $\phi_B : (\mathbb{C}^*)^p \rightarrow (\mathbb{C}^*)^q$ for the monomial map given by $(t_1, \ldots, t_p) \mapsto(t^{b_1}, \ldots, t^{b_q})$, where the exponent vectors $b_1, \ldots, b_q \in \mathbb{Z}^p$ are the columns of $B$. For properties of $\phi_B$, see \cite[Chapter 1]{Telen2025AppliedToricGeometry}. The monomial map $\phi_{M^t} : (\mathbb{C}^*)^n \rightarrow (\mathbb{C}^*)^n$ is an isomorphism of tori when $M$ is unimodular. Its inverse is given by $\phi_{M^{-t}} : (\mathbb{C}^*)^n \rightarrow (\mathbb{C}^*)^n$. 
We write $U^\circ$ for the very affine variety $ \phi_{M^{-t}}(V^\circ) \subset (\mathbb{C}^*)^n$ and $\varphi$ for the map $\phi_{M^{-t}} \circ \Gamma_{f^{-1}}$. The commutative diagram in Figure~\ref{fig: def of U variety} keeps track of the construction. 
\begin{figure}[h!]
    \centering
    \begin{tikzcd}[column sep=huge, row sep=large]
(\C^*)^d \setminus V(f_1 \cdots f_k)
  \arrow[r,shift left =1, "\Gamma_{f^{-1}}"]
  \arrow[dr,bend right=15,"\varphi"]
&
V^\circ
  \arrow[r,hook] \arrow[l, shift left=1, "p"]
&
(\C^*)^n
\\
&
U^\circ
  \arrow[u]
  \arrow[r,hook]
&
(\C^*)^n
  \arrow[u,"\phi_{M^t}"]
\end{tikzcd}
    \caption{Commutative diagram illustrating the definition of $U$.}
    \label{fig: def of U variety}
\end{figure}
We define the affine variety $U$ as the closure of $U^\circ$ in $\C^n$. %A priori, it is not clear why  $U$ should not intersect the base locus $Z(\Sigma)$. We will prove this at the end of the section in Corollary~\ref{cor: base locus}. 
Here are two examples of this construction. 
\begin{example}
Applying the recipe above to the polynomials $f_1, f_2, f_3$ and the matrix $M$ from Example \ref{ex: pentagon 3} gives the map $\varphi$ and the variety $U$ in Example~\ref{ex: pentagon intro}. 
\end{example}
\begin{example} \label{ex:P1P1chart}
    The polynomials $f_1=1+t_1$ and $f_2=1+t_2$ give rise to the fan $\Sigma$ from Example~\ref{ex:P1P1cox}.  These satisfy Assumptions \ref{assump: on polys} and \ref{assump:nonneg}. We have $X_\Sigma = \P^1 \times \P^1$ and \[ M \, = \, \begin{pmatrix}
        1 & 0 & -1 & 0\\
       0 & 1 & 0 & -1\\
        0 & 0 & 1 & 0\\
        0 & 0 & 0 & 1
    \end{pmatrix}. \] 
    The map $\varphi: (\C^*)^2 \dashrightarrow (\C^*)^4$ is $\varphi(t_1,t_2) = (t_1({1+t_1})^{-1}, {t_2}({1+t_2})^{-1},({1+t_1})^{-1}, ({1+t_2})^{-1}).$ The variety $U \simeq \mathbb{C}^2$ is the affine plane in $\mathbb{C}^4$ defined by $u_1+u_3=u_2+u_4=1$.
\end{example}
\noindent The following result allows us to identify the coordinates of $U^\circ$ as the Cox coordinates of $X_\Sigma$. 
\begin{proposition}\label{prop:inviscox}
    Let $f_1, \ldots, f_k \in \mathbb{C}[t_1^{\pm 1}, \ldots, t_d^{\pm 1}]$ be Laurent polynomials satisfying Assumption \ref{assump: on polys}. The inverse of the map $\varphi: (\mathbb{C}^*)^d \setminus V(f_1 \cdots f_k) \rightarrow U^\circ$ is the restriction of the monomial map $\phi_{F^t}: (\mathbb{C}^*)^n \rightarrow (\mathbb{C}^*)^d$ to ${\rm im} \, \varphi = U^\circ \subseteq (\mathbb{C}^*)^n$. 
\end{proposition}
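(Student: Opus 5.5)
The plan is to compute $\phi_{F^t}\circ\varphi$ directly and show it is the identity on $(\mathbb{C}^*)^d\setminus V(f_1\cdots f_k)$. Since $\varphi$ is injective (it is $\Gamma_{f^{-1}}$, which has the left inverse $p$, followed by the isomorphism $\phi_{M^{-t}}$), such a left inverse is automatically the two-sided inverse of $\varphi$ on its image $U^\circ$. So everything reduces to the identity $\phi_{F^t}\circ\phi_{M^{-t}} = \pi_d$, where $\pi_d:(\mathbb{C}^*)^n\to(\mathbb{C}^*)^d$ is the projection onto the first $d$ coordinates. Once this holds, we get $\phi_{F^t}\circ\varphi = \pi_d\circ\Gamma_{f^{-1}} = {\rm id}$, because the first $d$ coordinates of $\Gamma_{f^{-1}}(t)$ are exactly $t$.

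To prove the identity I will use the composition rule for monomial maps, $\phi_B\circ\phi_A = \phi_{AB}$ whenever $A\in\mathbb{Z}^{p\times q}$ and $B\in\mathbb{Z}^{q\times r}$. I would double-check the order of multiplication on a one-line example: the $j$-th coordinate of $\phi_B(\phi_A(t))$ is $\prod_i (t^{a_i})^{b_{ij}} = t^{\sum_i b_{ij}a_i}$, which is $t$ raised to the $j$-th column of $AB$. With this rule,
\[ \phi_{F^t}\circ\phi_{M^{-t}} \,=\, \phi_{M^{-t}F^t} \,=\, \phi_{(FM^{-1})^t}. \]
Since $F$ consists of the first $d$ rows of $M$, we have $F = \begin{pmatrix} {\rm id}_d & 0\end{pmatrix} M$. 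Hence $FM^{-1} = \begin{pmatrix} {\rm id}_d & 0\end{pmatrix}$, and $\phi_{(FM^{-1})^t}$ is precisely the coordinate projection $\pi_d$.

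Unimodularity of $M$ from Assumption \ref{assump: on polys} is what makes $M^{-1}$ an integer matrix, so that all the monomial maps above are well defined on tori. No step here is a real obstacle. The only place needing care is keeping the transposes and the direction of composition straight, which the sanity check above on the composition rule handles.
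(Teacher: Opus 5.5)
Your proof is correct and is essentially the paper's argument. The paper writes the inverse as $p\circ\phi_{M^t}$ and observes that the first $d$ coordinates of $\phi_{M^t}$ are $\phi_{F^t}$; your identity $\phi_{F^t}\circ\phi_{M^{-t}}=\pi_d$ (equivalently $FM^{-1}=\begin{pmatrix}{\rm id}_d & 0\end{pmatrix}$) is the same fact, composed with the isomorphism $\phi_{M^{-t}}$ instead of $\phi_{M^t}$, and written out via the composition rule for monomial maps.
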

\begin{proof}
    Recall that $\varphi = \phi_{M^{-t}} \circ \Gamma_{f^{-1}}$. The graph map is clearly invertible with inverse given by the projection $p$ onto the first $d$-coordinates. By construction $\phi_{M^{-t}}$ has inverse $\phi_{M^{t}}$. The proposition follows from the fact that the first $d$ coordinates of $\phi_{M^t}$ are given by $\phi_{F^t}$. 
\end{proof}
Recall that the monomial map $\phi_{F^t}$ is precisely the restriction of the GIT quotient map $\pi: \C^n\setminus Z(\Sigma) \to X_\Sigma$ to the torus $(\C^*)^n$. Proposition \ref{prop:inviscox} says that $\varphi$ is a section of $\pi$: 
\[ \pi \circ \varphi \, = \,  {\rm id}_{(\mathbb{C}^*)^d \setminus V(f_1 \cdots f_k)}. \] Thus, there is exactly one point of $U^\circ$ on each $G$-orbit $\pi^{-1}(p)$, for $p \in (\mathbb{C}^*)^d \setminus V(f_1 \cdots f_k) \subset X_\Sigma$.

%\textcolor{blue}{We need a lemma which says that $U$ is disjoint from $Z(\Sigma)$, so that $\pi_{|U}$ is well-defined. This follows from the defining equations we have.}

We now continue working towards a proof of Theorem~\ref{thm:mainintro} by introducing the variety $\mathcal{U}$. For this, we will remove the vanishing locus of $f_1 \cdots f_k$ from $X_\Sigma$. More precisely, the Newton polytope $P = P_1 + \cdots + P_k$ of $f_1 \cdots f_k$ corresponds to a Cartier divisor 
\[ D_P \, = \,  \sum_{j = 1}^n ( \sum_{i = 1}^k a_{i,j}) D_{j} \, \in \, {\rm Div}_T(X_\Sigma). \] 
Hence, $f_1 \cdots f_k$ can be viewed as a section of the corresponding line bundle, and its divisor of zeros, denoted by $H$, is the closure of $V(f_1 \cdots f_k) \subset (\mathbb{C}^*)^d$ in $X_\Sigma$. By Lemma \ref{lem: smooth}, the polytope $P$ is smooth. In particular, the divisor $D_P$ is very ample. The support of $H$ is a hyperplane section of $X_\Sigma$ in its projective embedding corresponding to $D_P$. This establishes that $\mathcal{U}:=X_\Sigma\setminus H$ is an affine open~subset.

Let $U_{>0} = U \cap \mathbb{R}_{>0}^n$ be the points of $U^\circ$ with positive coordinates. If the Laurent polynomials $f_1,\ldots,f_k$ have nonnegative coefficients, then $U_{>0} \supseteq \varphi(\R_{>0}^d)$. The opposite inclusion follows from  Proposition~\ref{prop:inviscox}, so that $U_{>0} = \varphi(\R_{>0}^d)$. Let $U_{\geq 0}$ be the semialgebraic set $U \cap \mathbb{R}^n_{\geq 0}$.

\begin{theorem}\label{thm: main with polys}
    Let $f_1,\ldots, f_k$ satisfy Assumptions~\ref{assump: on polys} and \ref{assump:nonneg}. Consider $\varphi: (\mathbb{C}^*)^d \setminus V(f_1 \cdots f_k) \rightarrow \mathbb{C}^n$, $U \subset \mathbb{C}^n$, and $\mathcal{U} \subset X_\Sigma$ as built above. The following statements hold: 
    \begin{enumerate}
    \item $U \subset \mathbb{C}^n \setminus Z(\Sigma)$ (see Corollary \ref{cor: base locus}) and $\pi_{|U}: U  \overset{\sim}{\longrightarrow} \mathcal{U}$ is an isomorphism,
    \item $\varphi$ extends to the inverse $\mathcal{U} \overset{\sim}{\longrightarrow} U$ of $\pi_{|U}$, and $\Sigma_{{\rm Newt}(\varphi)} = \Sigma$,
    \item $\pi(U_{\geq 0}) = (X_\Sigma)_{\geq 0}$.
\end{enumerate}
Hence, $U$ is a positive chart of $X_\Sigma$, in the sense of Definition \ref{def:maindef}.
\end{theorem}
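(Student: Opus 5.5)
The approach is to describe $U$ and its inverse map through homogenization in the Cox ring. For each $i$, let $F_i = \eta_{a_i}(f_i) \in S_{K^t a_i}$ be the homogenization of $f_i$ with respect to the divisor $D_{P_i}$. The key identity to establish first is that on $U^\circ$ we have $F_i(u) = 1$ for all $i$. By construction, $u = \varphi(t)$ satisfies $\phi_{M^t}(u) = (t, f(t)^{-1})$. The first $d$ coordinates give $t = \phi_{F^t}(u)$, and the last $k$ give $u^{a_i} = f_i(t)^{-1}$. Hence
\[ F_i(u) \, = \, u^{a_i} \, f_i(\phi_{F^t}(u)) \, = \, 1. \]
This yields $U \subseteq V(F_1 - 1, \ldots, F_k - 1)$. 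I would then show that this is an equality, or at least that $U$ is a component of it. Each $F_i$ contains the monomial $u^{F^t m + a_i}$ for every vertex $m$ of $P_i$; at such a vertex the exponent vanishes on the rays of the corresponding normal cone. Because $D_{P_i}$ is basepoint free, for every maximal cone $\sigma$ the polynomial $F_i$ contains a monomial supported only on the rays outside $\sigma$, i.e.\ dividing $u^{\hat\sigma}$.

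\textbf{Base locus.} Suppose $u \in U$ lies in $Z(\Sigma)$, so $u^{\hat\sigma} = 0$ for all $\sigma$. I would use the equations $F_i = 1$ together with $\sum_i a_i = a_P$ to derive a contradiction. Take the product $\prod_i F_i = 1$; this is the homogenization of $f_1 \cdots f_k$ with respect to $D_P$. Its monomials are $u^{F^t m + a_P}$ with $m$ a lattice point of $P$, so the positive chart sits inside $\{\eta_{a_P}(f_1\cdots f_k) = 1\}$. Since $P$ is smooth and full-dimensional, each monomial $u^{F^t m + a_P}$ with $m \in P$ is divisible by some $u^{\hat\sigma}$: take $\sigma$ to be the cone of a face containing $m$ and use the fact that $\hat\sigma$ collects all rays not vanishing at $m$. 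Thus every monomial vanishes on $Z(\Sigma)$, while the polynomial equals $1$ on $U$. Hence $U \cap Z(\Sigma) = \emptyset$, which is Corollary~\ref{cor: base locus}. The divisibility claim needs care when $m$ is interior to a face, but one can take $\sigma$ maximal containing that face.

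\textbf{Isomorphism.} Define $\psi \colon \mathcal{U} \to \mathbb{C}^n$ on $\pi^{-1}(\mathcal{U})$ by normalizing: given Cox coordinates $u$ with $F_i(u) \neq 0$, act by the unique $g \in G$ with $F_i(g \cdot u) = 1$ for all $i$. This $g$ exists and is unique because the degrees $\deg F_i = K^t a_i$ form a $\mathbb{Z}$-basis of ${\rm Cl}(X_\Sigma) \simeq \mathbb{Z}^k$, by unimodularity of $M$. Explicitly, the correcting factor is a Laurent monomial in the $F_i(u)$, so $\psi(u) = \bigl(u_j \prod_i F_i(u)^{-c_{ij}}\bigr)_j$ with $c$ taken from the last $k$ columns of $M^{-1}$. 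This expression is $G$-invariant and regular on $\pi^{-1}(\mathcal{U})$, so it descends to a morphism $\mathcal{U} \to \mathbb{C}^n$. Here $\mathcal{U} = X_\Sigma \setminus H$ is exactly where all $F_i \neq 0$. On the torus, $\psi$ agrees with $\varphi$ by Proposition~\ref{prop:inviscox}, so $\psi(\mathcal{U}) \subseteq U$ by density, and $\pi \circ \psi = {\rm id}$. Conversely, $\psi \circ \pi_{|U} = {\rm id}_U$ holds on the dense subset $U^\circ$, hence everywhere, using that $U$ avoids $Z(\Sigma)$ and that $F_i = 1$ on $U$. This proves parts 1 and 2, apart from the Newton polytope claim. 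For that claim, the coordinates of $\varphi$ are $t^{(M^{-t})\text{-exponents}}$ times products of powers of the $f_i$. Every $f_i$ appears with a nonzero exponent in some coordinate, so ${\rm Newt}(\varphi)$ is a positive combination of the $P_i$ plus a point, and its normal fan is the common refinement $\Sigma_P = \Sigma$. The nonvanishing of the exponents follows from invertibility of $M$.

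\textbf{Nonnegative part.} The image $\pi(U_{\geq 0})$ lies in $(X_\Sigma)_{\geq 0}$ by definition. Conversely, given $p \in (X_\Sigma)_{\geq 0}$ with Cox coordinates $u \geq 0$, the nonnegative coefficients make each $F_i(u) \geq 0$. It is in fact positive: choose $\sigma$ with $u^{\hat\sigma} > 0$, which exists since $u \notin Z(\Sigma)$, and note that $F_i$ contains a monomial dividing $u^{\hat\sigma}$ with positive coefficient, by basepoint-freeness. So $p \in \mathcal{U}$, and the normalizing element $g$ can be taken in $G \cap \mathbb{R}_{>0}^k$ because its factors are real positive powers. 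Hence $\psi(p) \geq 0$. I expect the main obstacle to be the careful monomial-divisibility arguments, that is, showing that $H$ misses $(X_\Sigma)_{\geq 0}$ and that $U$ misses $Z(\Sigma)$; both rest on the vertex monomials of each $F_i$.
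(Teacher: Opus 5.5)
Your proof is correct, and for the central isomorphism it takes a genuinely different route from the paper.

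\textbf{The paper's route.} The paper never writes down the inverse of $\pi_{|U}$. Instead it realizes $\mathbb{C}[U]=\mathbb{C}[\varphi^w : w\in\mathbb{N}^n]$ and $\mathbb{C}[\mathcal{U}]=\mathbb{C}[t^\ell/(f_1\cdots f_k) : \ell\in P\cap\mathbb{Z}^d]$ inside $\mathbb{C}(t_1,\ldots,t_d)$. It proves they coincide by clearing denominators to $(f_1\cdots f_k)^r$ and invoking very ampleness of the smooth polytope $P$: for $r\gg 0$, lattice points of $rP$ are sums of $r$ lattice points of $P$. The resulting isomorphism $\mathcal{U}\to U$ is matched with $\pi$ on $U^\circ$. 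Corollary~\ref{cor: base locus}, which is exactly your base-locus argument, then finishes point~1. For point~3 the paper takes limits along paths from $\mathbb{R}^d_{>0}$ and uses continuity of the extended $\varphi$; it only asserts that $(X_\Sigma)_{\geq 0}\subseteq\mathcal{U}$.

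\textbf{Your route.} You write the inverse explicitly as the $G$-invariant normalization $u\mapsto\bigl(u_j\prod_i F_i(u)^{-K_{ji}}\bigr)_j$ on $\pi^{-1}(\mathcal{U})$ and descend it through Cox's quotient. This needs only unimodularity of $M$: with $K$ the last $k$ columns of $M^{-1}$, the identity $AK={\rm id}$ gives $\deg F_i=K^ta_i=e_i$. You need neither very ampleness nor a projective embedding. Your vertex-monomial argument genuinely proves $F_i>0$ at nonnegative Cox coordinates, so point~3 drops out of the explicit formula.

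\textbf{What your route buys beyond the theorem.} Since $\psi(\pi(u))=u$ whenever all $F_i(u)=1$, and such $u$ avoid $Z(\Sigma)$ by the same base-locus argument, you get $V(F_1-1,\ldots,F_k-1)=U$ set-theoretically. This settles the ``equality, or at least a component'' question you raised and never needed. The same formulas also work on rings. One map sends $u_j\mapsto u_j\prod_iF_i^{-K_{ji}}$; the other is the map induced by $S\to S/\langle F_1-1,\ldots,F_k-1\rangle$ on $S[F_1^{-1},\ldots,F_k^{-1}]$. They are mutually inverse between $S/\langle F_1-1,\ldots,F_k-1\rangle$ and the degree-zero part of $S[F_1^{-1},\ldots,F_k^{-1}]$, which is a domain. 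So that ideal is prime and equals $I(U)$, which is Conjecture~\ref{conj: saturation}.

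A small slip: a vertex monomial of $F_i$ is supported on the variables of $u^{\hat\sigma}$, so it divides a power of $u^{\hat\sigma}$ rather than $u^{\hat\sigma}$ itself. Only the support matters where you use it.
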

\begin{proof}
    The theorem essentially follows from the discussion in \cite[Section 10]{ArkaniHamed2021}. For completeness, we rephrase the argument in the language of toric geometry, using our notation. We start by showing that $\varphi: \mathbb{C}^d \setminus V(f_1 \cdots f_k) \rightarrow U$ extends to a morphism $\varphi: \mathcal{U} \rightarrow U$. The coordinate ring of $U$ is generated by the coordinate functions $\varphi_1, \ldots, \varphi_n$ of $\varphi$. By construction, $\varphi_i$ is a Laurent monomial in the coordinates of $\Gamma_{f^{-1}}$, with exponent given by the $i$-th column of $M^{-t}$. Hence, the coordinate ring  $\mathbb{C}[U]$ is isomorphic to the $\mathbb{C}$-algebra
    \[ \mathbb{C}\big[ \varphi^w \, : \, w \in \mathbb{N}^n \big] \, = \, \mathbb{C}\big[\Gamma_{f^{-1}}^{M^{-t}w} \, : \, w \in \mathbb{N}^n \big] \, = \, \mathbb{C} \Bigg [ \frac{t_1^{\nu_{1}} \cdots t_d^{\nu_{d}}}{f_1^{s_{1}} \cdots f_k^{s_{k}}} \, : \, M^t v \, = \, F^t \nu + \sum_{i = 1}^k s_{i} \, a_i \geq 0    \Bigg ]. \]
    Here $a_1^t, \ldots, a_k^t$ are the last $k$ rows of $M$ and $v$ is the vector in $\C^n$ whose first $d$ entries are $\nu=(\nu_1,\ldots,\nu_d)$ and whose last $k=n-d$ entries are $s=(s_1,\ldots,s_k)$. On the other hand, via the projective embedding of $X_\Sigma$ corresponding to the smooth polytope $P$, the coordinate ring of $\mathcal{U}$ is represented~as  
    \[ \mathbb{C}[\mathcal{U}] \, \simeq \, \mathbb{C} \Big [ \frac{t^\ell}{f_1 \cdots f_k} \, : \, \ell \in P \cap \mathbb{Z}^d \Big ]. \]
    We have now represented $\mathbb{C}[\mathcal{U}]$ and $\mathbb{C}[U]$ as subalgebras of the field of rational functions $\mathbb{C}(t_1, \ldots, t_d)$. We will show that these subalgebras are equal. In particular, each $\varphi_i$ extends to a regular function on $\mathcal{U}$. The inclusion $\mathbb{C}[\mathcal{U}] \subseteq \mathbb{C}[U]$ is clear by identifying the generator $t^\ell/(f_1 \cdots f_k)$ of $\mathbb{C}[\mathcal{U}]$ with the generator corresponding to $\nu = \ell$ and $s = (1, \ldots, 1)$ of $\mathbb{C}[U]$. To show the opposite inclusion, note that for each choice of $(\nu, s)$ satisfying $F^t \nu + \sum_{i = 1}^k s_{i} a_i \geq 0$ there exists a positive integer $r$ and coefficients $c_{\ell'} \in \mathbb{C}$ such that 
    \[ \frac{t_1^{\nu_{1}} \cdots t_d^{\nu_{d}}}{f_1^{s_{1}} \cdots f_k^{s_{k}}} \, = \, \frac{t_1^{\nu_{1}} \cdots t_d^{\nu_{d}}}{f_1^{s_{1}} \cdots f_k^{s_{k}}} \frac{f_1^{r - s_{1}} \cdots f_k^{r - s_{k}}}{f_1^{r - s_{1}} \cdots f_k^{r - s_{k}}} \, = \, \sum_{\ell' \, \in \,  (r \cdot P) \cap \mathbb{Z}^d} c_{\ell'} \,  \frac{t^{\ell'}}{(f_1 \cdots f_k)^{r}}.\]
    By Lemma \ref{lem: smooth}, $P$ is smooth, and hence very ample. Therefore, we can pick $r$ large enough so that every lattice point of $r \cdot P$ can be written as a sum of $r$ lattice points in $P$ \cite[Exercise 4.9]{michalek2018selected}. That is, for each $\ell' \in (r \cdot P) \cap \mathbb{Z}^d$ we can write 
    \[ \frac{t^{\ell'}}{(f_1 \cdots f_k)^{r}} \, = \, \prod_{j = 1}^r \frac{t^{\ell_j}}{f_1 \cdots f_k}\]
    where each $\ell_j$ belongs to $P \cap \mathbb{Z}^d$. This proves the equality $\mathbb{C}[U]  = \mathbb{C}[\mathcal{U}]$ as subalgebras of $\mathbb{C}(t_1, \ldots, t_d)$. In particular, $\varphi: (\mathbb{C}^*)^d \setminus V(f_1 \cdots f_k) \rightarrow U^\circ$ extends to an isomorphism $\mathcal{U} \rightarrow U$. By Proposition \ref{prop:inviscox}, the inverse of that isomorphism agrees with the quotient map $\pi_{|U \setminus Z(\Sigma)}: U \setminus Z(\Sigma) \rightarrow X_\Sigma$ on the dense open subset $U^\circ$ of $U \setminus Z(\Sigma)$. We will see in Corollary \ref{cor: base locus} that $U \cap Z(\Sigma) = \emptyset$, which concludes the proof of point 1. 

    \smallskip

    The claim $\Sigma_{{\rm Newt}(\varphi)} = \Sigma$ in point 2 of the theorem follows directly from the construction of $\varphi$. Indeed, note that each $\varphi_i$ is a Laurent monomial evaluated in $t_1,\ldots,t_d,f_1(t),\ldots,f_k(t)$. Moreover, as $M$ is unimodular, each $f_j$ appears in some $\varphi_i$ as a factor. 

    \smallskip

    Since $f_1,\ldots,f_k$ have nonnegative coefficients, the closure of $V(f_1 \cdots f_k) \subset (\mathbb{C}^*)^d$ does not intersect the nonnegative part of $X_\Sigma$. In other words, $\mathcal{U} \supset (X_\Sigma)_{\geq 0}$. The inclusion $\pi(U_{\geq 0}) \subseteq (X_\Sigma)_{\geq 0}$ is clear. For the opposite inclusion, we use the fact that $(X_\Sigma)_{\geq 0}$ is the Euclidean closure of $(\mathbb{R}^d)_{>0}$ in the real points of $X_\Sigma$. For any $p \in (X_\Sigma)_{\geq 0}$, let $ \gamma: [0,1] \rightarrow (X_\Sigma)_{\geq 0}$ be a smooth path with $\gamma(\varepsilon) \in \mathbb{R}^d_{>0}$ for $\varepsilon \in (0,1]$ and $\gamma(0) = p$. Clearly, for $\varepsilon \in (0,1]$ we have $((\varphi_1 \circ \gamma)(\varepsilon), \ldots, (\varphi_n \circ \gamma)(\varepsilon)) \in U_{\geq 0}$. Then, by continuity of $\varphi$ on $\mathcal{U}$, we have $\varphi(p) = \lim_{\varepsilon \rightarrow 0} ((\varphi_1 \circ \gamma)(\varepsilon), \ldots, (\varphi_n \circ \gamma)(\varepsilon)) \in U_{\geq 0}$. This shows that $\varphi((X_\Sigma)_{\geq 0}) \subseteq U_{\geq 0}$, and applying $\pi$ on both sides we find that $(X_\Sigma)_{\geq 0} \subseteq \pi(U_{\geq 0})$. This concludes the proof of point 3.
\end{proof}

One particular consequence of Theorem~\ref{thm: main with polys} is that the points of $U$ are in one-to-one correspondence with the $G$-orbits $\pi^{-1}(p)$ for $p \in \mathcal{U}$. 

%The toric moment map $\mu: (X_\Sigma)_{\geq 0} \rightarrow P$ identifies $(X_\Sigma)_{\geq 0}$ with the polytope $P$ as a real manifold with boundary \cite[\S 4.2]{Fulton1993}. Point 3 of Theorem \ref{thm: main with polys} has the following corollary.

We have defined the varieties $U^\circ$ and $U$ using the rational parametrization $\varphi$. We now derive their defining equations. For $i = 1, \ldots, k$, let $f_i^h = \eta_{a_i}(f_i)$ be the homogenization of $f_i$ to the Cox ring $S = \mathbb{C}[u_1, \ldots, u_n]$ of $X_\Sigma$, as in \eqref{eq:homogenization}, where $a_i$ is the $(d+i)$-th row of $M$ representing the torus-invariant divisor $D_{P_i}$. 

\begin{proposition} \label{prop:eqsfromhom}
    Let $f_1,\ldots,f_k$ satisfy Assumption~\ref{assump: on polys} and let $U^\circ \subset (\mathbb{C}^*)^n$ be as above. Then, $U^\circ$ is a complete intersection given by the $k$ equations $f_{1}^h(u) = \cdots = f_{k}^h(u) = 1$.
\end{proposition}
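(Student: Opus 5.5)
We pass through the torus isomorphism $\phi_{M^t}$ and show that $U^\circ$ corresponds exactly to $V^\circ$ under it. By construction $U^\circ = \phi_{M^{-t}}(V^\circ)$, so $u \in (\mathbb{C}^*)^n$ lies in $U^\circ$ if and only if $\phi_{M^t}(u) \in V^\circ$. Recall that $V^\circ \subset (\mathbb{C}^*)^n$ is the graph of $t \mapsto (f_1(t)^{-1}, \ldots, f_k(t)^{-1})$ over $(\mathbb{C}^*)^d \setminus V(f_1 \cdots f_k)$. Hence $V^\circ$ is cut out in $(\mathbb{C}^*)^n$, with coordinates $(t, y)$, by the $k$ equations $y_i f_i(t) = 1$. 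The condition $f_i(t) \neq 0$ is then automatic, since $y_i \in \mathbb{C}^*$. This is already a complete intersection: it is the graph of a morphism, of dimension $d = n-k$.

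Next we pull back these equations along $\phi_{M^t}$. The first $d$ coordinates of $\phi_{M^t}(u)$ are $\phi_{F^t}(u)$, that is, $t_j = u^{F_{j,:}}$. The $(d+i)$-th coordinate is $y_i = u^{a_i}$, where $a_i^t$ is the $(d+i)$-th row of $M$. Thus $u \in U^\circ$ if and only if $u^{a_i} f_i(\phi_{F^t}(u)) = 1$ for $i = 1, \ldots, k$. Section \ref{sec:prelim} noted that the homogenization $\eta_z(f)$ equals $u^z \cdot \pi_{|(\mathbb{C}^*)^n}^*(f)$. Applied with $z = a_i$, this gives
\[ u^{a_i} \, f_i(\phi_{F^t}(u)) \, = \, \sum_{m} c_m \, u^{F^t m + a_i} \, = \, \eta_{a_i}(f_i)(u) \, = \, f_i^h(u). \]
Here the sum runs over $m \in P_i \cap \mathbb{Z}^d$. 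The definition $a_{i,j} = -\min_{m \in P_i}\langle \rho_j, m\rangle$ gives $P_i \subseteq P_{a_i}$, so all exponents $F^t m + a_i$ are nonnegative and $f_i^h$ is a genuine polynomial in $S_{K^t a_i}$. Therefore
\[ U^\circ \, = \, \{ u \in (\mathbb{C}^*)^n \, : \, f_1^h(u) = \cdots = f_k^h(u) = 1 \}. \]

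It remains to justify the words ``complete intersection'', and this is the only point needing care. Since $\phi_{M^t}$ is an automorphism of the torus, it induces an automorphism of the Laurent polynomial ring $\mathbb{C}[u^{\pm 1}]$. This automorphism carries the ideal $\langle y_i f_i(t) - 1 \rangle$ to $\langle f_i^h(u) - 1 \rangle$. Hence the latter ideal is prime (in fact radical) of codimension $k$ exactly when the former is. The former is the ideal of a graph, and its quotient ring is $\mathbb{C}[t^{\pm1}][f_1^{-1}, \ldots, f_k^{-1}]$, a domain of dimension $d$. So $k$ equations cut out a variety of codimension $k$ scheme-theoretically, as required.
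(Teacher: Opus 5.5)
Your proof is correct and follows the paper's argument. Both write $V^\circ$ as the complete intersection $w_{d+i} f_i(w_1,\ldots,w_d) = 1$ in $(\mathbb{C}^*)^n$, then pull these equations back along the torus automorphism $\phi_{M^t}$ (first $d$ coordinates $\phi_{F^t}$, last $k$ coordinates $u^{a_i}$) to obtain $f_i^h(u) = 1$. Your extra checks, that $P_i \subseteq P_{a_i}$ so the exponents are nonnegative and that the ideal is prime of codimension $k$ via the quotient $\mathbb{C}[t^{\pm 1}][f_1^{-1},\ldots,f_k^{-1}]$, fill in details the paper leaves implicit.
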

\begin{proof}
Let us use coordinates $w_1, \ldots, w_n$ for the torus $(\mathbb{C}^*)^n$ containing the $d$-dimensional very affine variety $V^\circ$. Note that, since $V^\circ$ is defined as the image of the graph map $\Gamma_{f^{-1}}$, it is a complete intersection whose ideal is generated by the $k$ Laurent polynomials
\[ 1 - w_{d+1} \, f_{1}(w_1, \ldots, w_d) \, , \, \ldots \, , \, 1 - w_{n} \, f_{k}(w_1, \ldots, w_d) . \]
Pulling back these equations along the isomorphism $\phi_{M^t}$ gives indeed $1- f^h_{j}(u) = 0$.
\end{proof}

Using the equations of $U^\circ$ we can now prove that $\pi|_U$ is actually a well-defined map.

\begin{corollary}\label{cor: base locus}
    Let $f_1, \cdots, f_k$ satisfy Assumption \ref{assump: on polys}. The affine variety $U \subset \C^n$ constructed above does not intersect the base locus $Z(\Sigma)$.
\end{corollary}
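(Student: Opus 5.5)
The plan is to use the equations from Proposition~\ref{prop:eqsfromhom} and show that they cannot all hold at a point of $Z(\Sigma)$.

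First, $U^\circ$ satisfies $f_1^h = \cdots = f_k^h = 1$, and these are polynomial equations on $\mathbb{C}^n$. So they also hold on the Zariski closure $U$. In particular the product $g := f_1^h \cdots f_k^h$ equals $1$ at every point of $U$. It therefore suffices to show that $g$ vanishes identically on $Z(\Sigma)$.

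Second, I identify $g$ as a homogenization. Homogenization is multiplicative, since $\eta_z(f)$ is $u^z \cdot \pi_{|(\mathbb{C}^*)^n}^*(f)$. Hence
\[ g \, = \, \eta_{a_1 + \cdots + a_k}(f_1 \cdots f_k) \, = \, \eta_{a_P}(f_1\cdots f_k), \]
where $a_P = \sum_i a_i$ is the divisor of $P = \sum_i P_i$. Consequently every monomial of $g$ has the form $u^{F^t m + a_P}$ with $m \in P \cap \mathbb{Z}^d$.

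Third, I show that each such monomial is divisible by $u^{\hat\sigma}$ for some $\sigma \in \Sigma(d)$. Fix $m \in P \cap \mathbb{Z}^d$ and let $Q$ be the face of $P$ containing $m$ in its relative interior. The $j$-th entry $\langle \rho_j, m\rangle + a_{P,j}$ vanishes exactly when the facet of $P$ indexed by $\rho_j$ contains $Q$. Choose a vertex $v$ of $Q$, with corresponding maximal cone $\sigma$. Every facet containing $Q$ contains $v$, and the facets containing $v$ are exactly those indexed by the $d$ rays of $\sigma$, because $P$ is simple by Lemma~\ref{lem: smooth}. So the zero entries of $F^t m + a_P$ lie among the rays of $\sigma$. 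All other entries are positive integers, so $u^{\hat\sigma}$ divides $u^{F^t m + a_P}$.

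Finally, $Z(\Sigma) = V(B(\Sigma))$ and $B(\Sigma) = \langle u^{\hat\sigma} \rangle$. By the third step every monomial of $g$ lies in $B(\Sigma)$, so $g$ vanishes on $Z(\Sigma)$, contradicting $g = 1$ on $U$. Hence $U \cap Z(\Sigma) = \emptyset$.

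The main obstacle I expect is the face-combinatorics step: correctly translating the vanishing pattern of $F^t m + a_P$ into the facets containing the minimal face of $m$. This needs $a_P$ to be exactly the support-function vector of $P$, which holds because $a_{P,j} = \sum_i a_{i,j} = -\min_{P}\langle \rho_j,\cdot\rangle$ by additivity of minima over Minkowski sums. It also needs simplicity of $P$ so that each vertex determines a maximal cone.
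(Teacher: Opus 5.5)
Your proposal is correct and follows the paper's proof essentially step for step: you pass from $U$ to $V(f_1^h-1,\ldots,f_k^h-1)$, identify $\prod_i f_i^h$ with $\eta_{a_1+\cdots+a_k}(f_1\cdots f_k)$, and show that each monomial $u^{F^t m + a_P}$ is divisible by some $u^{\hat\sigma}$ using a vertex of the smallest face containing $m$. Hence the product lies in $B(\Sigma)$ and cannot equal $1$ on $Z(\Sigma)$. (Minor remark: for any full-dimensional polytope, the facets through a vertex are exactly those indexed by the rays of its normal cone, so simplicity is not needed there; it only tells you there are $d$ such facets.)
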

\begin{proof}
    By Proposition~\ref{prop:eqsfromhom} we have that $U \subseteq V_{\mathbb{C}^n}(1- f^h_{i}(u): i=1,\ldots,k)$. It suffices to check that the righthand side does not intersect $Z(\Sigma)$. If $u^* \in Z(\Sigma)$ satisfies $f^h_i(u^*) = 1$ for $i = 1, \ldots,k$, then $\prod_{i = 1}^k f^h_i(u^*) = \eta_{a_1 + \cdots + a_k}(f_1 \cdots f_k) (u^*) = 1$. However, since the divisor $D_{P_1} + \cdots + D_{P_k}$ is ample, each monomial of degree $\alpha = [D_{P_1} + \cdots + D_{P_k}] \in {\rm Cl}(X_\Sigma)$ in the Cox ring $S$ is divisible by a generator of the irrelevant ideal $B(\Sigma)$. Indeed, such monomials correspond to lattice points $\ell$ in $P \cap \mathbb{Z}^d$, and the variable $u_i$ appears in the monomial $m:=u^{F^t \ell + a_1 + \cdots + a_k}$ of $\ell$ if and only if $\ell$ does not lie on the $i$-th facet of $P$. Consequently, $m$ is divisible by each generator $u^{\hat{\sigma}}$ of $B(\Sigma)$ for which the smallest face of $P$ containing $\ell$ also contains the vertex corresponding to the $d$-dimensional cone $\sigma \in \Sigma(d)$. Such a vertex exists for each $\ell$. But this means that $\prod_{i = 1}^k f^h_i \in B(\Sigma)$ and thus $\prod_{i = 1}^k f^h_i(u^*) = 0$, a contradiction. 
\end{proof}

\begin{corollary} \label{cor:saturation}
    Let $f_1,\ldots,f_k$ satisfy Assumption~\ref{assump: on polys} and let $U \subseteq \mathbb{C}^n$ be as above. The prime vanishing ideal $I(U) \subset \mathbb{C}[u_1, \ldots, u_n]$ is given by the saturation 
    \[ \langle f_1^h(u) - 1, \ldots, f_k^h(u) - 1 \rangle : \langle u_1 \cdots u_n \rangle^\infty.\]
\end{corollary}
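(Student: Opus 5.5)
The plan is to combine Proposition~\ref{prop:eqsfromhom} with the standard dictionary between closures of subvarieties of the torus and saturations by the product of all variables. Write $J = \langle f_1^h - 1, \ldots, f_k^h - 1\rangle \subset S = \mathbb{C}[u_1,\ldots,u_n]$ and $J^{\rm sat} = J : \langle u_1\cdots u_n\rangle^\infty$. We will show that $J^{\rm sat}$ equals the vanishing ideal of the Zariski closure $U$ of $U^\circ$, and that this ideal is prime.

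First, pass to the Laurent polynomial ring $S[(u_1\cdots u_n)^{-1}] = \mathbb{C}[(\mathbb{C}^*)^n]$, with extension map $S \to S[(u_1\cdots u_n)^{-1}]$. The standard fact is that for any ideal $J \subset S$, the contraction of $J\cdot S[(u_1\cdots u_n)^{-1}]$ back to $S$ equals $J^{\rm sat}$. Accordingly, the key claim is that the extended ideal $J\cdot \mathbb{C}[u^{\pm1}]$ is exactly the (prime, radical) vanishing ideal of $U^\circ$ in the Laurent ring.

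To see this, recall the proof of Proposition~\ref{prop:eqsfromhom}. In coordinates $w$ on $(\mathbb{C}^*)^n$, the ideal of $V^\circ$ in $\mathbb{C}[w^{\pm1}]$ is generated by $1 - w_{d+i}f_i(w_1,\ldots,w_d)$. Indeed, the quotient $\mathbb{C}[w^{\pm1}]/\langle 1 - w_{d+i}f_i\rangle$ is isomorphic to the localization $\mathbb{C}[t^{\pm1}][(f_1\cdots f_k)^{-1}]$, since each $w_{d+i}$ is forced to equal $f_i^{-1}$. This localization is a domain and is the coordinate ring of $(\mathbb{C}^*)^d\setminus V(f_1\cdots f_k) \cong V^\circ$, so the ideal is prime and is the full vanishing ideal.

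The isomorphism $\phi_{M^t}$ induces a ring automorphism of Laurent rings. This automorphism transports these generators to the elements $1 - u^{\pm c}f_i^h(u)$, where $u^{\pm c}$ is a unit monomial: pulling back $w_{d+i}f_i$ gives $f_i^h$ up to the monomial normalization coming from $\eta_{a_i}$. In fact, the pullback is exactly $f_i^h$, as in the proof of Proposition~\ref{prop:eqsfromhom}. Hence $J\cdot\mathbb{C}[u^{\pm1}] = I_{(\mathbb{C}^*)^n}(U^\circ)$, and this ideal is prime.

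Finally, contract back to $S$. The contraction of a prime ideal of the localization is prime, so $J^{\rm sat}$ is prime. It also equals $\{g \in S : g|_{U^\circ} = 0\}$. Since $U$ is the Zariski closure of $U^\circ$ in $\mathbb{C}^n$, a polynomial vanishes on $U$ if and only if it vanishes on $U^\circ$, so $J^{\rm sat} = I(U)$.

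The only step needing care is the claim that the graph ideal is already radical and prime in the Laurent ring, handled by the explicit quotient computation above. Note also that Corollary~\ref{cor: base locus} is not needed here.
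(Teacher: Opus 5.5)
Your argument is correct and matches the reasoning the paper leaves implicit: the paper states this corollary without a separate proof, as an immediate consequence of Proposition \ref{prop:eqsfromhom} (the ideal of $U^\circ$ in $\mathbb{C}[u^{\pm 1}]$ is generated by the $f_i^h-1$) together with the standard fact that contracting from $\mathbb{C}[u^{\pm 1}]$ back to $S$ is saturation by $u_1\cdots u_n$. Your localization computation showing that the graph ideal is prime justifies a point the paper only asserts, and your hedge about a monomial normalization is unnecessary, since $\phi_{M^t}^*$ sends $w_{d+i}f_i(w_{1:d})$ exactly to $u^{a_i}\sum_m c_{i,m}u^{F^tm} = f_i^h$.
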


The papers \cite{ClusterConf, had} feature families of examples for which it is shown that the saturation in Corollary \ref{cor:saturation} is not necessary, i.e., $I(U) =  \langle f_1^h(u) - 1, \ldots, f_k^h(u) - 1 \rangle$. The same is true in all examples we tried, see Section~\ref{sec: examples}. This motivates the following conjecture.
\begin{conjecture}\label{conj: saturation}
    Let $f_1,\ldots,f_k$ satisfy Assumption~\ref{assump: on polys}. The prime vanishing ideal of the variety $U \subseteq \mathbb{C}^n$ constructed as above is generated by the $k$ polynomials $f_{i}^h(u)-1,i=1, \ldots, k$.
\end{conjecture}

\begin{example}
    The defining equations of $U$ in Example \ref{ex: pentagon intro} are obtained by homogenizing $f_1, f_2$ and $f_3$ from Example \ref{ex: pentagon 3}. See Example \ref{ex:homogenizepentagon} for these homogenizations.
\end{example}

We have seen how from $k$ Laurent polynomials $f_1,\ldots,f_k$ satisfying Assumptions \ref{assump: on polys} and \ref{assump:nonneg} we can build varieties $U$ and $\mathcal{U}$ satisfying the three conditions in Theorem \ref{thm:mainintro}. We now show that we can find such Laurent polynomials for any smooth projective fan $\Sigma$.

\begin{proposition} \label{prop:fantopols}
    Let $\Sigma$ be any smooth projective fan in $\mathbb{R}^d$ whose rays span $\mathbb{R}^d$. There exist Laurent polynomials $f_1, \ldots, f_k$ satisfying Assumptions \ref{assump: on polys} and \ref{assump:nonneg}, and such that $\Sigma_P = \Sigma$.
\end{proposition}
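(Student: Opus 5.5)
The plan is to reduce the statement to a lattice-point problem in ${\rm Pic}(X_\Sigma)$, solve that problem, and then take the simplest possible Laurent polynomials. The target is $k = n-d$ nef torus-invariant Cartier divisors $z^{(1)}, \ldots, z^{(k)} \in \mathbb{Z}^n$ with three properties: their classes form a $\mathbb{Z}$-basis of ${\rm Cl}(X_\Sigma)$, each $z^{(i)}$ lies in ${\rm Def}(\Sigma)$, and $z^{(1)} + \cdots + z^{(k)} \in {\rm Type}(\Sigma)$. Given such divisors, I set
\[ f_i \, = \, \sum_{m \in P_{z^{(i)}} \cap \mathbb{Z}^d} t^m, \]
which has nonnegative coefficients, so Assumption \ref{assump:nonneg} holds.

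First I verify Assumption \ref{assump: on polys} for these $f_i$, in four steps.
\begin{enumerate}
\item Since $\Sigma$ is smooth, each nef divisor $z^{(i)}$ is basepoint free. Hence $P_i := P_{z^{(i)}}$ is a lattice polytope whose vertices are the points $m_\sigma$, $\sigma \in \Sigma(d)$, and ${\rm Newt}(f_i) = P_i$.
\item The support numbers $a_{i,j} = -\min_{m\in P_i}\langle \rho_j, m\rangle$ equal $z^{(i)}_j$. Indeed, every ray $\rho_j$ lies in some $\sigma\in\Sigma(d)$, and the inequality $\langle \rho_j, m \rangle + z^{(i)}_j \geq 0$ is tight at the vertex $m_\sigma$.
\item For nef divisors on $X_\Sigma$ one has $P_{z+z'} = P_z + P_{z'}$. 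Therefore $P = \sum_i P_i = P_{\sum_i z^{(i)}}$, and its normal fan is $\Sigma$ because the sum is ample. In particular $P$ is $d$-dimensional with exactly $n = d+k$ facets.
\item The matrix $M$ has rows $F$ followed by $a_i^t = (z^{(i)})^t$. Because $\Sigma$ is smooth, ${\rm Cl}(X_\Sigma) = \mathbb{Z}^n/{\rm im}_{\mathbb{Z}}F^t$ is torsion free, so ${\rm im}_{\mathbb{Z}}F^t$ is saturated. Consequently $\det M = \pm1$ if and only if the classes $[z^{(i)}]$ form a $\mathbb{Z}$-basis of this quotient.
\end{enumerate}

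It remains to find the divisors, working in ${\rm Pic}(X_\Sigma)={\rm Cl}(X_\Sigma)\simeq\mathbb{Z}^k$, where ${\rm Nef}(\Sigma)$ is a full-dimensional rational cone.
\begin{enumerate}
\item Pick a primitive lattice point $A$ in ${\rm int}\,{\rm Nef}(\Sigma)$.
\item Extend $A$ to a $\mathbb{Z}$-basis $(A, e_2, \ldots, e_k)$ of $\mathbb{Z}^k$; this is possible because $A$ is primitive.
\item For $r \in \mathbb{N}$, take the classes $A,\, rA+e_2,\, \ldots,\, rA+e_k$. They are obtained from $(A,e_2,\ldots,e_k)$ by elementary integer row operations, so they again form a $\mathbb{Z}$-basis.
\item Since $A$ lies in the open cone, $rA + e_i = r(A + e_i/r)$ is ample for $r \gg 0$. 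All the chosen classes are then ample, hence nef, and their sum is ample.
\item Lift each class to a torus-invariant divisor $z^{(i)}\in\mathbb{Z}^n$. Nefness and ampleness depend only on the class, since ${\rm Def}(\Sigma)$ and ${\rm Type}(\Sigma)$ are invariant under ${\rm im}_{\mathbb{R}}F^t$.
\end{enumerate}

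The main obstacle I expect is step 1, the existence of a primitive lattice point in the open cone ${\rm int}\,{\rm Nef}(\Sigma)$. My first approach is as follows. This interior is a nonempty open cone, so for $N$ large it contains a Euclidean ball of radius greater than $1$ around $NA_0$, where $A_0$ is any ample lattice class. Such a ball contains a primitive vector. For instance, in the basis of $\mathbb{Z}^k$ one can add $0$ or $1$ to a single coordinate, with the other coordinates fixed, so that two coordinates become consecutive integers, which forces the gcd to be $1$.

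A fallback is to weaken what is needed. It suffices to find some $\mathbb{Z}$-basis of ${\rm Pic}$ lying in ${\rm Nef}(\Sigma)$ with ample sum. Such a basis can also be produced from any basis $(g_i)$ by replacing $g_i$ with $g_i + N g_1'$ after a unimodular change of coordinates that puts $g_1'$ inside the cone. Either way, once step 1 is secured, the verification of Assumption \ref{assump: on polys} above is routine.
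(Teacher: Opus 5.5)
Your construction is correct in substance and takes a genuinely different route from the paper. However, the argument you give for the step you single out as the main obstacle is false, even though that step is trivial.

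\textbf{The flawed step.} A Euclidean ball of radius greater than $1$ need not contain a primitive vector. By the Chinese remainder theorem, $\mathbb{Z}^2$ contains arbitrarily large squares of lattice points none of which is primitive. Also, changing a single coordinate by $0$ or $1$ cannot in general make two coordinates consecutive; try $(5,10)$. Your fallback does not help either: a unimodular change of coordinates placing a basis vector $g_1'$ in the open cone is the same thing as a primitive lattice point in ${\rm int}\,{\rm Nef}(\Sigma)$, so it is circular.

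\textbf{The fix.} Take any ample class $y \in \mathbb{Z}^k$, which exists because $\Sigma$ is projective, and put $A = y/\gcd(y_1,\ldots,y_k)$. Since ${\rm int}\,{\rm Nef}(\Sigma)$ is stable under positive scaling, $A$ is a primitive lattice point in it.

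With that, the rest goes through. The shearing $A, rA+e_2,\ldots,rA+e_k$ gives a $\mathbb{Z}$-basis of ${\rm Pic}(X_\Sigma)$ consisting of ample classes with ample sum. Your verification of Assumption~\ref{assump: on polys} is correct: basepoint freeness, $a_i = z^{(i)}$, $P_{z+z'} = P_z + P_{z'}$ for nef $z,z'$, and the criterion for $\det M = \pm 1$. It is also more explicit than the paper's proof, which leaves the identification $a_i = z^{(i)}$ and the unimodularity criterion implicit.

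\textbf{Comparison with the paper.} The paper obtains a smooth $k$-dimensional subcone $C \subseteq {\rm Nef}(\Sigma)$ from toric resolution of singularities and takes its primitive ray generators. These may lie on the boundary of the nef cone, so the $D_{P_i}$ are only nef, and ampleness of the sum comes from its lying in the interior of $C$.

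Your shearing argument is an elementary, self-contained proof that such a smooth subcone exists, in fact one inside the ample cone. It avoids the resolution theorem entirely.

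The paper's route buys flexibility. One may choose $C$ with non-ample rays, e.g.\ $C = {\rm Nef}(\Sigma)$ when the nef cone is smooth, as in the pentagon and $E_6$ examples. This gives small summands $P_i$ and sparse $f_i$, which is what lets Algorithm~\ref{algo} reproduce the known binary geometries. Your choice forces every $P_i$ to have normal fan $\Sigma$ and size growing with $r$. The resulting $f_i$ and charts are therefore much larger. That is harmless for the existence statement but less useful computationally.
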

\begin{proof}
    The nef cone ${\rm Nef}(\Sigma)$ is a full dimensional cone in $\mathbb{R}^{n-d} = \mathbb{R}^k$. Its lattice points are the basepoint free divisor classes on $X_\Sigma$. By toric resolution of singularities \cite[Theorem 11.1.9]{CoxLittleSchenckToric}, we can find a smooth $k$-dimensional cone $C$ contained in ${\rm Nef}(\Sigma)$. Choose torus-invariant divisors $D_{P_1}, \ldots, D_{P_k} \in {\rm Div}_T(X_\Sigma)$ whose classes in ${\rm Cl}(X_\Sigma) = {\rm Pic}(X_\Sigma)$ are the primitive ray generators of $C$. The global sections of the line bundle ${\cal O}_{X_\Sigma}({D_{P_i}})$ on $X_\Sigma$ corresponding to $D_{P_i}$ are Laurent polynomials with a prescribed Newton polytope $P_i$. We choose $f_i$ among these sections to have nonnegative coefficients standing with all candidate Laurent monomials, and positive coefficients standing with the monomials corresponding to the vertices of $P_i$, so that ${\rm Newt}(f_i) = P_i$. The matrix $M$ from \eqref{eq: M matrix} is unimodular by~construction, since $[D_{P_1}], \ldots, [D_{P_k}]$ generate a smooth cone $C$ in the Picard lattice. The class $[\sum_{i = 1}^k D_{P_i}]$ lies in the interior of $C \subset {\rm Nef}(\Sigma)$, so the normal fan of $P = P_1 + \cdots + P_k$ is $\Sigma$. 
\end{proof}

\begin{proof}[Proof of Theorem \ref{thm:mainintro}]
    The statement follows from Proposition \ref{prop:fantopols} and Theorem \ref{thm: main with polys}.
\end{proof}

\begin{algorithm} \label{algo} The proof of Proposition \ref{prop:fantopols} describes an algorithm for computing a positive chart of $X_\Sigma$. The input is $\Sigma$ or a smooth polytope whose normal fan is $\Sigma$, and the output consists of $f_1,\ldots, f_k$ and $U$ from Theorem \ref{thm: main with polys}. First, compute a smooth $k$-dimensional subcone $C \subseteq {\rm Nef}(\Sigma)$. This can be done by essentially following the algorithm for toric resolution of singularities \cite[Theorem 11.1.9]{CoxLittleSchenckToric}. Second, for the divisors $D_{P_i}$ corresponding to the $k$ primitive ray generators of $C$, choose a positive global section $f_i$ of ${\cal O}_{X_\Sigma}(D_{P_i})$. These $f_i$ satisfy Assumptions \ref{assump: on polys} and \ref{assump:nonneg} by construction. Third, compute the ideal of $U$ via Corollary~\ref{cor:saturation}. 
\end{algorithm}
Notice that one important step in Algorithm \ref{algo} is to compute a smooth subcone $C$ of the nef cone. In combinatorics and polytope theory, studying the nef cone (or type/deformation cone) of a given polytope is an interesting question on its own, see \cite{postnikov2008faces,castillo2022deformation,typeConeAssoc}. We have implemented this algorithm in Julia. Our code is available at \cite{zenodo}. 

\begin{example}
    We have seen in Example \ref{ex: pentagon 2} that the nef cone in our running example is itself a smooth cone, and we also computed its generators. These generators are the divisor classes corresponding to the last three rows in the matrix $M$ from Example \ref{ex: pentagon 3}.
\end{example}

\begin{example} \label{ex:singular}
    The aim of this example is to illustrate a version of our construction for singular toric varieties. In particular, we exemplify how Theorem \ref{thm: main with polys} fails when the Laurent polynomials $f_i$ are sections of non-Cartier divisors which lead to a unimodular extension $M$ of $F$. Let $X_\Sigma$ and $F$ be as in Example \ref{ex:P121}. Adding the row $\begin{pmatrix}
        0 & 0 & 1
    \end{pmatrix}$ to $F$ gives a unimodular matrix $M$. The corresponding torus-invariant divisor $D_3 = 0 D_1 + 0 D_2 + 1 D_3$ is effective, but not Cartier. The global sections of ${\cal O}_{X_\Sigma}(D_3)$ are identified with polynomials of the form $c_0 + c_1 \, t_1$. The monomials $1$ and $t_1$ are the lattice points in the polytope \eqref{eq:P001}. We choose the section with positive coefficients $f_1 = 1 + t_1$ and consider the map $\Gamma_{f^{-1}}: (t_1,t_2) \mapsto (t_1,t_2,(1+t_1)^{-1})$ to complete the diagram in Figure \ref{fig: def of U variety}. The variety $U^\circ$ is the image of the map 
    \[ \varphi(t_1,t_2) \, = \, (\phi_{M^{-t}} \circ \Gamma_{f^{-1}})(t_1,t_2) \, = \, \Big ( \frac{t_1}{1+t_1}, \frac{t_2}{(1+t_1)^2}, \frac{1}{1+t_1} \Big).\]
    Its defining equation is $f_1^h-1 = u_3 + u_1 - 1 = 0$, which illustrates that Proposition \ref{prop:eqsfromhom} holds in this setting. The nonnegative part of $U$ is not compact. In particular, it is not homeomorphic to the triangle $P$ from Example \ref{ex:P121}. The polytope ${\rm Newt}(\varphi)$ of $\varphi$ is a line segment.
\end{example}

\section{Examples}\label{sec: examples}
This section presents examples of the construction outlined in Section \ref{sec:constructing}. We reproduce $u$-equations for two binary geometries from the positive geometry literature: Example \ref{ex:hexagon} appears in \cite{had}, and Example \ref{ex:pezzotope} comes from \cite{PosPezzo}. We reiterate that our running Example \ref{ex: pentagon intro} also appears in \cite{ArkaniHamed2021,binaryGeometries,BrownMZVModuli}, and more examples of binary geometries which fit our framework are found in \cite{HeLiRamanZhang,had,PosPezzo,pellytope}. We have implemented several useful functions for computing with positive charts in a Julia Package named \texttt{PositiveChartsToricVarieties.jl}, which relies heavily on \texttt{Oscar.jl} \cite{OSCAR}. We shall illustrate its functionalities. The code is available at \cite{zenodo}.

\begin{example}\label{ex: squares}
    We start with positive charts of the surface $\P^1 \times \P^1$. Recall from Example \ref{ex:P1P1cox} that $\P^1 \times \P^1\cong X_\Sigma$ where $\Sigma$ is the normal fan  of the unit square $P = [0,1]^2 \in \mathbb{R}^2$. Its ray matrix is $F = \left ( \begin{smallmatrix}
        1 & 0 & -1 & 0 \\ 0 & 1 & 0 & -1
    \end{smallmatrix} \right )$.
    Choosing the basis $[D_3],[D_4]$ for ${\rm Pic}(X_\Sigma)$, the nef cone $\Nef(\Sigma)$ is the positive orthant in $\R^2$. For any choice of nonnegative integer matrix $E = \left(\begin{smallmatrix}
        a & b \\ c & d
    \end{smallmatrix}\right)$ with determinant $1$ we have that the cone $C_E = \Cone((a,b),(c,d))$ is a smooth subcone of ${\rm Nef}(\Sigma)$. The rays of $C_E$ correspond to weak Minkowski summands of the unit square: an $a \times b$ rectangle and a $c \times d$ rectangle. We extend $F$ to a unimodular $4 \times 4$ matrix as follows:
    \[
        M = \begin{pmatrix}
            1 & 0 & -1 & 0\\
            0 & 1 & 0 & -1\\
            0 & 0 & a & b \\
            0 & 0 & c & d
        \end{pmatrix}, \quad \text{and} \quad M^{-1} = \begin{pmatrix}
            1 & 0 & d & -b\\
            0 & 1 & -c & a\\
            0 & 0 & d & -b \\
            0 & 0 & -c & a
        \end{pmatrix}.
    \]
    We choose two polynomials $f_1= (1+t_1)^a(1+t_2)^b \in H^0(X_\Sigma, {\cal O}_{X_\Sigma}(aD_3 + bD_4))$ and $f_2 = (1+t_1)^c(1+t_2)^d \in H^0(X_\Sigma,{\cal O}_{X_\Sigma}(cD_3 + dD_4))$. The map $\varphi: (\C^*)^2 \setminus V(f_1f_2) \to U^\circ$ is
    \[
        \varphi(t_1,t_2) \, = \, \left( \frac{t_1}{1+t_1}, 
        \frac{t_2}{1+t_2}, 
        \frac{1}{1+t_1}, 
        \frac{1}{1+t_2} \right).
    \]
    Interestingly, this does not depend on the choice of the nonnegative unimodular matrix $E$. The polynomials $f_1^h$ and $f_2^h$ are bihomogeneous forms in $S = \mathbb{C}[u_1,u_3] \otimes \mathbb{C}[u_2,u_4]$:  
    \[
        f_1^h = (u_3+u_1)^a(u_2+u_4)^b \qquad f_2^h = (u_3+u_1)^c(u_2+u_4)^d . 
    \]
    The ideal $\langle f_1^h -1, f_2^h-1 \rangle$ defines the positive chart $u_1 + u_3 = u_2 + u_4 = 1$ from Example~\ref{ex:P1P1chart}.
\end{example}

\begin{example} \label{ex:hexagon}
Let $P \subset \mathbb{R}^2$ be the hexagon with vertices $(0,0),(0,2),(1,3),(2,0),(3,1), (3,3)$. The ray matrix of its normal fan $\Sigma$ is $\left(\begin{smallmatrix}
    0 & -1 & 1 & 0 & -1 & 1\\
    -1 & 0 & 0 & 1 & 1 & -1
\end{smallmatrix}\right)$. 
The cone ${\rm Nef}(\Sigma)$ has ray generators
\[
    r_1 = [D_5+D_6], \, 
    r_2 = [D_1+D_6], \,  
    r_3 =[D_1+D_2+D_5], \, 
    r_4 = [D_1+D_2], \, 
    r_5 = [D_1+D_2+D_6].
\]
This is computed by applying the function \texttt{nef\_cone\_modulo\_lineality} in our package to the hexagon $P$. The cone $\Cone(r_1,r_2,r_3,r_5)$ is a smooth subcone of $\Nef(\Sigma)$. The choice of polynomials $(f_1,f_2,f_3,f_4) = (1 + t_1, 1 + t_2, 1 + t_1 + t_1t_2, 1 + t_2 + t_1t_2)$ yields the positive chart 
\[
     u_2u_5 + u_3u_6 \, = \,  u_1u_6 + u_4u_5\, = \,  u_1u_2u_5 + u_1u_3u_6 + u_3u_4u_5\, = \,  u_1u_2u_6 +u_2u_4u_5 + u_3u_4u_6\, = \, 1.
\]
This variety has degree 10. It also appears in \cite[Example 13.2]{had} and can be presented as a binary geometry \cite[Definition 2.1]{LamModuli} as follows:
\[  u_1 + u_3u_4u_5 \, = \, u_2 + u_3u_4u_6\, = \, u_3 + u_1u_2u_5 \, = \, u_4 + u_1u_2u_6\, = \, 
    u_5 + u_1u_3u_6^2 \, = \,  u_6 + u_2u_4u_5^2 \, = \, 1.\]
A different smooth subcone of $\Nef(\Sigma)$ yields a different variety. The positive chart obtained from the smooth cone $\Cone(r_1,r_2,r_3,r_4) \subset {\rm Nef}(\Sigma)$ is a complete intersection of degree $7$:
\[
    u_2u_5+u_3u_6\, = \, u_1u_6+u_4u_5\, = \, u_1u_2u_5 + u_1u_3u_6 + u_3u_4u_5 \, = \, u_1u_2+u_3u_4 \, = \, 1.
\]
This variety also admits a binary presentation: the six equations
\[  u_1 + u_3u_4u_5 \, = \, u_2 + u_3^2u_4u_6\, = \,u_3 + u_1u_2u_5 \, = \, u_4 + u_1^2u_2u_6\, = \, 
    u_5 + u_1u_3u_6 \, = \, u_6 + u_2u_4u_5^2 \, = \, 1\]
generate the same ideal. Note that the combinatorics of the binary presentations for these positive charts is the same, as the fan $\Sigma$ is the same for both, but the exponents appearing in the second term of each equation are different. 
In both cases, Conjecture~\ref{conj: saturation} holds. 
\end{example}

Binary presentations like those in Example \ref{ex:hexagon} were a main motivation for this project. At the moment, it is not clear to us how to systematically find a binary presentation for the vanishing ideal of $U^\circ$ from the generators in Proposition \ref{prop:eqsfromhom}, or how to decide whether such a binary presentation exists. However, our framework reproduces many binary geometries coming from specific fans appearing in the physics literature. A binary presentation for Example \ref{ex: pentagon intro} is found in \cite[Equation (0.1)]{LamModuli}. Example \ref{ex:pezzotope} provides another instance.

\begin{example} \label{ex:pezzotope}
Next, we compute a positive chart of a toric fourfold arising in positive del Pezzo geometry \cite{PosPezzo}. The \emph{$E_6$ Pezzotope} is the polytope $P = \sum_{i=1}^{11} \Newt(f_i)$,~where 
\begin{align*}
&f_1 = 1+t_1, &&\\
&f_2 = 1+t_2, &&f_7 = 1+t_4+t_2 t_4+t_1 t_2 t_4,\\
&f_3 = 1+t_2+t_1 t_2, &&f_8 = 1+t_3+t_3 t_4+t_2 t_4+t_2 t_3+t_2 t_4 t_3,\\
&f_4 = 1+t_3, &&f_9 = 1+t_3+t_3 t_4+t_2 t_4+t_2 t_3+t_1 t_2 t_4+t_2 t_3 t_4,\\ 
&f_5 = 1+t_3+t_2 t_3, &&f_{10} = 1+t_3+t_2 t_3+t_2 t_4+t_3t_4+t_1 t_2 t_4+t_2 t_3 t_4+t_1 t_2 t_3 t_4,\\
&f_6 = 1+t_4, &&f_{11} = 1+t_3+t_2 t_3+t_2 t_4+t_3t_4+t_1 t_2 t_3+t_1 t_2 t_4+t_2 t_3 t_4+t_1 t_2 t_3 t_4.
\end{align*} 
The $f$-vector of $P$ is $(45,90,60,15)$. The divisors $D_{P_i}$ associated with the polytopes $P_i = {\rm Newt}(f_i)$ are the rays of the smooth rational cone ${\rm Nef}(\Sigma_P)$. The ray matrix of the fan $\Sigma_P$ is 
\setcounter{MaxMatrixCols}{20}
\[ F\, = \, \begin{pmatrix}
    0 & 0 & 0 & 0 & -1 & 0 & 0 & -1 & -1 & 1 & 0 & 0 & -1 & -1 & 0 \\
    0 & 0 & 0 & -1 & 0 & 0 & -1 & 0 & 1 & 0 & 1 & -1 & 0 & 0 & -1 \\
    0 & 0 & -1 & 1 & 0 & 1 & 0 & 0 & 0 & 0 & 0 & 0 & -1 & 1 & 1 \\
    1 & -1 & 0 & 1 & 0 & 0 & 0 & 1 & 0 & 0 & 0 & 1 & 0 & 1 & 0
\end{pmatrix}. \] 
We read from the rows of $F$ that, in order to homogenize $f_1,\ldots,f_{11}$, we replace
\[
    t_1 = \frac{u_{10}}{u_5u_8u_9u_{13}u_{14}}, \qquad t_2 = \frac{u_9u_{11}}{u_4u_7u_{12}u_{15}}, \qquad t_3 = \frac{u_4u_6u_{14}u_{15}}{u_3u_{13}}, \qquad t_4 = \frac{u_1u_4u_8u_{12}u_{14}}{u_2}
\]
and take the numerator of each expression. This yields 11 generators $f_1^h-1, \ldots, f_{11}^h-1$ of $I(U^\circ)$. We verified that Conjecture~\ref{conj: saturation} holds in this case. That is, $U$ is a complete intersection in $\C^{15}$ defined by these 11 equations. Moreover, we also checked that $U$ can be presented as a binary geometry: its ideal is alternatively generated by the $15$ polynomials
\begin{align*}
u_1 + u_2 u_5 u_7 u_{13} u_{15} - 1, && u_6 + u_3 u_7 u_8 u_{12} u_{13} - 1, && u_{11} + u_4 u_5 u_7 u_8 u_{12} u_{13} u_{14} u_{15} - 1, \\
u_2 + u_1 u_4 u_8 u_{12} u_{14} - 1, && u_7 + u_1 u_6 u_9 u_{11} u_{14} - 1 , && u_{12}+u_2u_5u_6u_9u_{11}u_{13}u_{14}u_{15}-1,\\
u_3 + u_4 u_5 u_6 u_{14} u_{15} - 1, && u_8 + u_2 u_6 u_{10} u_{11} u_{15} - 1,&& u_{13} + u_1 u_4 u_6 u_{10} u_{11} u_{12} u_{14} u_{15} - 1 ,\\
u_4 + u_2 u_3 u_9 u_{11} u_{13} - 1, && u_9 + u_4 u_7 u_{10} u_{12} u_{15} - 1, && u_{14} + u_2 u_3 u_7 u_{10} u_{11} u_{12} u_{13} u_{15} - 1 ,\\
u_5 + u_1 u_3 u_{10} u_{11} u_{12} - 1,&& u_{10} + u_5 u_8 u_9 u_{13} u_{14} - 1, && u_{15} + u_1 u_3 u_8 u_9 u_{11} u_{12} u_{13} u_{14} - 1.
\end{align*}
These are the defining equations found in \cite{PosPezzo} for a partial compactification of the moduli space $U^\circ \simeq Y(3,6)$ of marked smooth cubic surfaces. 
\end{example}

\begin{example}
    The $3$-dimensional permutahedron ${\rm Perm}(3)$ is the convex hull of the $24$ points in $\mathbb{R}^4$ whose coordinates are the permutations of $(1,2,3,4)$. The nef cone of this realization is an $11$-dimensional cone with $37$ rays. Using the polytope ${\rm Perm}(3)$ as an input to the function \texttt{unimod\_nef\_polynomials} in our Julia package \texttt{PositiveChartsToricVarieties.jl}, a smooth subcone $C$ of $\Nef({\rm Perm}(3))$ is automatically selected. The function returns the list
    \begin{align}\label{eq: polys perm3}
 &1 + t_1, \qquad  1 + t_2, \qquad 1 + t_3, \qquad t_1 + t_2, \qquad  t_2 + t_3, \qquad  1 + t_2 + t_3,\nonumber\\
 &  t_1 + t_3 + t_1t_3, \qquad t_1 + t_2 + t_1t_2 + t_1t_3 + t_2t_3, \qquad t_1 + t_3 + t_1t_2 + t_1t_3 + t_2t_3,\\
 & t_1t_2 + t_1t_3 + t_2t_3, + t_1t_2t_3,\qquad t_1 + t_2 + t_3 +t_1t_2 + t_1t_3 + t_2t_3 + t_3^2. \nonumber
 \end{align}
    These are $f_1, \ldots, f_{11}$. The divisors of their Newton polytopes correspond to the rays of $C$. Another output is the unimodular matrix $M$ associated with this choice of polynomials: 
 \begin{equation*}\label{eq: matrix perm3}
 	M = \scalebox{0.75}{$\left(\begin{array}{cccccccccccccc}
	  -1 & -1 & -1 & -1 &  1 &  1 & 1 &  0 &  0 &  1 & 0 &  0 &  0 & 0\\
 0 & -1 & -1 &  0 &  0 &  1 & 0 & -1 &  1 &  1 & 1 & -1 &  0 & 0\\
 0 &  0 & -1 & -1 &  1 &  0 & 0 & -1 &  1 &  1 & 0 &  0 & -1 & 1\\
 1 &  1 &  1 &  1 &  0 &  0 & 0 &  0 &  0 &  0 & 0 &  0 &  0 & 0\\
 0 &  1 &  1 &  0 &  0 &  0 & 0 &  1 &  0 &  0 & 0 &  1 &  0 & 0\\
 0 &  0 &  1 &  1 &  0 &  0 & 0 &  1 &  0 &  0 & 0 &  0 &  1 & 0\\
 1 &  1 &  1 &  1 &  0 & -1 & 0 &  1 &  0 & -1 & 0 &  1 &  0 & 0\\
 0 &  1 &  1 &  1 &  0 &  0 & 0 &  1 & -1 & -1 & 0 &  1 &  1 & 0\\
 0 &  1 &  1 &  1 &  0 &  0 & 0 &  1 &  0 &  0 & 0 &  1 &  1 & 0\\
 1 &  1 &  2 &  2 & -1 &  0 & 0 &  1 &  0 & -1 & 0 &  0 &  1 & 0\\
 1 &  2 &  2 &  2 &  0 & -1 & 0 &  2 &  0 & -1 & 0 &  1 &  1 & 0\\
 1 &  2 &  2 &  2 & -1 &  0 & 0 &  2 &  0 & -1 & 0 &  1 &  1 & 0\\
 1 &  2 &  3 &  2 & -1 & -1 & 0 &  2 & -1 & -2 & 0 &  1 &  1 & 0\\
 1 &  2 &  2 &  2 &  0 &  0 & 0 &  2 &  0 & -1 & 0 &  1 &  2 & 0
	\end{array}\right)$}.
\end{equation*}
The first three rows form the $F$-matrix. The variety $U^\circ \subset (\mathbb{C}^*)^{14}$ is three-dimensional and cut out by the homogenizations of the polynomials in \eqref{eq: polys perm3}. These are obtained by replacing
\[
	t_1 \rightarrow \frac{u_5u_6u_7u_{10}}{u_{1}u_2u_3u_4} \qquad t_2 \rightarrow \frac{u_6u_9u_{10}u_{11}}{u_2u_3u_8u_{12}} \quad t_3 \rightarrow \frac{u_5u_9u_{10}u_{14}}{u_3u_4u_8u_{13}}
\]
and taking the numerator of the obtained rational functions. The ideal of $U^\circ$ can be directly obtained from the polytope ${\rm Perm}(3)$ by using the function \texttt{Y\_variety} in our Julia package. Saturating by the product $u_1 \cdots u_{14}$ yields the ideal of its closure $U \subset \mathbb{C}^{14}$, a positive chart on the permutohedral variety $X_{{\rm Perm}(3)}$. The saturation did not terminate within a reasonable amount of time, so we could not verify Conjecture~\ref{conj: saturation} in this example. 
\end{example}

\section{Moment maps} \label{sec:moment}

Moment maps play a central role in positive toric geometry, see \cite[\S 4.2]{Fulton1993}, \cite{Sottile2003} or \cite[Chapter 5]{Telen2025AppliedToricGeometry}. These maps identify the nonnegative part of a projective toric variety with their polytope. More precisely, a moment map is a homeomorphism $\mu: (X_\Sigma)_{\geq 0} \overset{\simeq}{\longrightarrow} P$, where $P$ is such that $\Sigma_P$ equals $\Sigma$. Our goal in this section is to associate a natural moment map to a positive chart $U$, whose nonnegative part $U_{\geq 0}$ can be identified with $(X_\Sigma)_{\geq 0}$ by definition. We provide morphisms $\mu_U: U \rightarrow \mathbb{C}^n$, such that $(\mu_U)_{|U_{\geq 0}}$ is a moment map. 

Let $f_1, \ldots, f_k \in \mathbb{C}[t_1^{\pm 1}, \ldots, t_d^{\pm 1}]$ be Laurent polynomials satisfying Assumptions \ref{assump: on polys} and \ref{assump:nonneg}. Let $U$ be the corresponding positive chart of $X_\Sigma$ as in Theorem \ref{thm: main with polys}. The Newton polytope of $f_i$ is $P_i = {\rm Newt}(f_i)$, and we write $f_i = \sum_{m \in P_i \cap \mathbb{Z}^d} c_{i,m} \, t^m$, where $c_{i,m} \in \mathbb{R}_{\geq 0}$ by Assumption \ref{assump:nonneg}. For any choice of $k$ complex numbers $s_1, \ldots, s_k$, we define the following morphism: 
\begin{equation} \label{eq:muYs} \mu_{U,s} : U \rightarrow \mathbb{C}^n, \quad \quad  u \longmapsto \sum_{i = 1}^k s_i \sum_{m \in P_i \cap \mathbb{Z}^d} c_{i,m} \, u^{\eta_i(m)} \, \eta_i(m) \end{equation}
where $\eta_i(m) = F^t m + a_i \in \mathbb{Z}^{n}_{\geq 0}$ are the exponents of $f_i^h(u)$, the homogenization of $f_i$ to the Cox ring of $X_\Sigma$. It is convenient to introduce the following block row/column notation for the unimodular matrix $M$ from \eqref{eq: M matrix} and its inverse $M^{-1}$: 
\[ M \, = \, \begin{pmatrix}
    F \\ A
\end{pmatrix}, \quad M^{-1} \, = \, \begin{pmatrix}
    B & K
\end{pmatrix}, \quad \text{where} \quad F \in \mathbb{Z}^{d \times n}, \, A \in \mathbb{Z}^{k \times n}, \, B \in \mathbb{Z}^{n \times d}, \, K \in \mathbb{Z}^{n \times k}. \]
Since $M \cdot M^{-1} = M^{-1}\cdot M = {\rm id}_{n\times n}$, we have the following identities: 
\begin{equation} \label{eq:matidentities}FB = {\rm id}_{d \times d}, \quad FK = 0_{d \times k}, \quad  AB = 0_{k \times d}, \quad AK = {\rm id}_{k \times k}, \quad BF + KA = {\rm id}_{n \times n}  . \end{equation} 
In particular, the notation for $K$ is consistent with Section \ref{sec:prelim}, in that $K$ is a Gale dual matrix of $F$.  Our main result says that $\mu_{U,s}$ has the desired property. 

\begin{theorem} \label{thm:momentmap}
    For any $s = (s_1, \ldots, s_k) \in \mathbb{R}^k_{>0}$, the restriction $(\mu_{U,s})_{|U_{\geq 0}}$, with $\mu_{U,s}$ as in \eqref{eq:muYs}, is a homeomorphism whose image is the $d$-dimensional polytope 
    \begin{equation} \label{eq:imagemu} \mu_{U,s}(U_{\geq 0}) \, = \, \{ x \in \mathbb{R}^n \, : \, K^t x = s \text{ and } x \geq 0 \}. \end{equation}
\end{theorem}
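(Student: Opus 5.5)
We plan to reduce the statement to the classical algebraic moment map of a projective toric variety, transported to the positive chart through the parametrization $\varphi$ of Theorem \ref{thm: main with polys}. On $U^\circ$, by Proposition \ref{prop:eqsfromhom}, every point satisfies $f_i^h(u) = 1$. Pulled back along $\varphi$, each monomial term $c_{i,m} u^{\eta_i(m)}$ becomes $c_{i,m} t^m / f_i(t)$, because $u^{\eta_i(m)} = u^{F^t m} u^{a_i}$ and $\phi_{F^t}(\varphi(t)) = t$. Moreover, $u^{a_i}$ equals $1/f_i(t)$ since $A$ is the block of $M$ that recovers the last $k$ coordinates of $\Gamma_{f^{-1}}$. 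Hence on $\varphi((\mathbb{C}^*)^d\setminus V(f_1\cdots f_k))$ we get
\[ \mu_{U,s}(\varphi(t)) \,=\, \sum_{i=1}^k s_i \sum_{m} \frac{c_{i,m} t^m}{f_i(t)}\,(F^t m + a_i). \]
This is $F^t \mu_{\rm tor}(t) + \sum_i s_i a_i$, where $\mu_{\rm tor}(t) = \sum_i s_i \sum_m \frac{c_{i,m}t^m}{f_i(t)} m$ is the weighted sum of the standard algebraic moment maps of the $P_i$. By density and continuity, this formula holds on all of $U_{\geq 0}$.

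\textbf{Step 1 (image lies in the polytope).} From $K^tF^t = 0$ and $K^t A^t = {\rm id}$, which are the identities \eqref{eq:matidentities}, we get $K^t\mu_{U,s}(u) = \sum_i s_i \sum_m c_{i,m}u^{\eta_i(m)} = \sum_i s_i f_i^h(u) = s$ on $U$. This holds on $U^\circ$ by Proposition \ref{prop:eqsfromhom} and then on all of $U$ by closure. Nonnegativity of $\mu_{U,s}(u)$ for $u\in U_{\geq 0}$ is immediate, because $c_{i,m}\geq 0$ and $\eta_i(m)\geq 0$.

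\textbf{Step 2 (identification with $P(s)$).} The affine map $y\mapsto F^t y + \sum_i s_i a_i$ identifies $\mathbb{R}^d$ with the affine space $\{K^t x = s\}$; it is injective and its image is that space by Gale duality. Under this map the region $x\geq 0$ corresponds to $P(s) = \{y : F^t y + A^t s \geq 0\}$, which is the Minkowski sum $\sum s_i P_i$ because the $a_i$ are support functions of the $P_i$ and $\Sigma$ refines every normal fan $\Sigma_{P_i}$. This polytope is $d$-dimensional with normal fan $\Sigma$ for $s>0$, since $\sum s_i[D_{P_i}]$ lies in the interior of the smooth cone $C\subset{\rm Nef}(\Sigma)$. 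So it suffices to show that $\mu_{\rm tor}$ is a homeomorphism from $(X_\Sigma)_{\geq 0}$, which is identified with $U_{\geq 0}$ via $\varphi$ and $\pi$ by Theorem \ref{thm: main with polys}, onto $\sum_i s_i P_i$.

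\textbf{Step 3 (main obstacle: bijectivity).} On $\mathbb{R}^d_{>0}$, write $t = e^{y}$. Then $\mu_{\rm tor} = \nabla_y \Phi$ with $\Phi(y) = \sum_i s_i \log f_i(e^y)$. Each summand is convex, and their sum is strictly convex because $P=\sum P_i$ is full-dimensional and all $s_i>0$. Strict convexity gives injectivity of $\nabla\Phi$. The gradient image is the interior of $\sum s_iP_i$, by the standard Legendre/Birch argument for positive sums of log-partition functions: $\Phi$ grows like the support function $\max_{p \in \sum s_iP_i}\langle p,y\rangle$ at infinity, so $\Phi(y)-\langle p, y\rangle$ is proper for interior $p$. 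For the boundary, we plan to use the torus-orbit stratification. The closure of the orbit for a face of $P$, taken in $(X_\Sigma)_{\geq 0}$, is mapped by the extended $\mu_{\rm tor}$ onto the corresponding face of $\sum s_iP_i$ by the same argument applied to the face polynomials, namely the restrictions of the $f_i$ to the corresponding faces of the $P_i$. This yields a continuous bijection from the compact space $(X_\Sigma)_{\geq 0}$ to a Hausdorff space, hence a homeomorphism. The delicate point is checking that the restricted face polynomials still yield a positive sum whose Minkowski sum is exactly that face. We will verify this through the compatibility of face structures, $\text{face}_w(\sum s_iP_i) = \sum s_i\,\text{face}_w(P_i)$, together with the explicit limit of $t^m/f_i(t)$ along one-parameter subgroups.
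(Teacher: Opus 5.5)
Your proposal is correct and follows essentially the same route as the paper. You write $\mu_{U,s}=F^t\bigl(\sum_i s_i\mu_i\bigr)+A^t s$ on the torus part, which is the paper's $M^t\circ\iota_s$. You obtain the interior from the strictly convex potential $\sum_i s_i\log f_i(e^y)$, which is exactly the lemma on weighted moment maps that the paper cites rather than reproves. You handle the boundary through the torus-orbit stratification, using the face polynomials $f_i^w$ and one-parameter limits.

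Two small fixes. In Step 1, since $K^t\eta_i(m)=e_i$, the identity should read $K^t\mu_{U,s}(u)=(s_1f_1^h(u),\ldots,s_kf_k^h(u))$, not a scalar sum. In the boundary step, what you need is that each open stratum (not its closure) maps bijectively onto the relative interior of its face; that is what gives a global bijection before your compact-to-Hausdorff conclusion.
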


\begin{example} \label{ex:pentagonmoment}
    The map $\mu_{U,s}: U \rightarrow \mathbb{C}^5$ in our running example is given by 
    \[ \begin{pmatrix} 0 \\ 0 \\ s_1 \\ s_1 \\ 0 \end{pmatrix} u_3u_4 +
    \begin{pmatrix} s_1 \\ 0 \\ 0 \\ 0 \\ 0 \end{pmatrix} u_1 + 
    \begin{pmatrix} 0 \\ 0 \\ 0 \\ 0 \\ s_2 \end{pmatrix} u_5 + 
    \begin{pmatrix} 0 \\ s_2 \\ s_2 \\ 0 \\ 0 \end{pmatrix} u_2u_3 + 
    \begin{pmatrix} 0 \\ 0 \\ 0 \\ s_3 \\ s_3 \end{pmatrix} u_4u_5 +
    \begin{pmatrix} 0 \\ s_3 \\ s_3 \\ s_3 \\ 0 \end{pmatrix} u_2u_3u_4 +
    \begin{pmatrix} s_3 \\ s_3 \\ 0 \\ 0 \\ 0 \end{pmatrix} u_1u_2.
    \]
    For positive $s_1, s_2, s_3$, the image of $U_{\geq 0}$ under this map is a pentagon in $\mathbb{R}^5$ contained in the plane $K^t x = s$, with $K \in \mathbb{Z}^{5 \times 3}$ as in Example \ref{ex:homogenizepentagon}.
\end{example}

Before proving Theorem \ref{thm:momentmap}, let us establish that the polytope in \eqref{eq:imagemu} indeed has the correct normal fan. Let $P(s) \subset \mathbb{R}^d$ be the polytope obtained as the weighted Minkowski sum $P(s) = s_1 \cdot P_1 + \cdots + s_k \cdot P_k$. Since $s_i > 0, i = 1, \ldots, k$ and $[D_1], \ldots, [D_k]$ span a $k$-dimensional subcone of ${\rm Nef}(\Sigma)$, the normal fan of $P(s)$ is $\Sigma$. We embed $P(s)$ in $\mathbb{R}^n$ via $\iota_s: \mathbb{R}^d \rightarrow \mathbb{R}^n, \,  \nu \mapsto (\nu, s)$. Using \eqref{eq:matidentities}, one checks that the righthand side of \eqref{eq:imagemu} is $M^t \iota_s(P(s))$. 

\begin{proof}[Proof of Theorem \ref{thm:momentmap}]
Our strategy is to relate $\mu_{U,s}$ to more standard moment maps in toric geometry. First, we observe that the restriction to $U^\circ$ of $\mu_{U,s}$ is given by the composition $(\mu_{U,s})_{|U^\circ} \, = \, M^t \circ \mu_{{\rm Cay}} \circ (\phi_{M^t})_{|U^\circ}$, where $\phi_{M^t}: (\mathbb{C}^*)^n \rightarrow (\mathbb{C}^*)^n$ is the Laurent monomial map given by the rows of $M$, $M^t: \mathbb{C}^n \rightarrow \mathbb{C}^n$ is a $\mathbb{C}$-linear map, and $\mu_{{\rm Cay}}$ is given~by 
    \[ \mu_{{\rm Cay}} \, : \, (\mathbb{C}^*)^n \rightarrow \mathbb{C}^n, \quad (w_1, \ldots, w_n) \longmapsto \sum_{i = 1}^k s_i \sum_{m \in P_i \cap \mathbb{Z}^d} c_{i,m} \, w_{1:d}^m \, w_{d+i} \, \begin{pmatrix}
    m \\ e_i 
\end{pmatrix}. \]
Here $e_i \in \mathbb{R}^k$ is the $i$-th standard basis vector, and $w_{1:d} = (w_1, \ldots, w_d)$. For the expert reader, we point out that $\mu_{{\rm Cay}}$ is the affine algebraic moment map associated with the Cayley configuration of $f_1, \ldots, f_k$. Recall that $U^\circ$ is parametrized by $\varphi: (\mathbb{C}^*)^d \setminus V(f_1 \cdots f_k) \rightarrow U^\circ$, and that $\phi_{M^t} \circ \varphi$ is given by $t \mapsto (t_1, \ldots, t_d, f_1^{-1}(t), \ldots, f_k^{-1}(t))$, see Figure \ref{fig: def of U variety}. Hence, on the intersection of $U$ with the torus of $X_\Sigma$, i.e., on $(\mathbb{C}^*)^d \setminus V(f_1 \cdots f_k) \cong U^\circ \subset U$, we have
\[ (M^{-t} \circ \mu_{U,s}) (t) \, = \, \sum_{i = 1}^k s_i \sum_{m \in P_i \cap \mathbb{Z}^d} \frac{c_{i,m} \, t^m}{f_i(t)} \begin{pmatrix}
    m \\ e_i
\end{pmatrix} \, = \, \begin{pmatrix}
    s_1 \, \mu_1(t) + \cdots + s_k \, \mu_k(t) \\ s
\end{pmatrix} \]
where $\mu_i(t) = \sum_{m \in P_i \cap \mathbb{Z}^d} c_{i,m} \, t^m \, f_i(t)^{-1} \, m$. The map $\mathbb{R}^d_{>0} \rightarrow \mathbb{R}^d$ given by $t \mapsto s_1 \, \mu_1(t) + \cdots + s_k \, \mu_k(t)$ appears in \cite[Lemma 2.7]{MatsubaraHeoMizeraTelen2023EulerIntegrals}, where it is shown to be a diffeomorphism onto the interior of $P(s)$.
We must show that its extension $\tilde{\mu}: U \rightarrow \mathbb{C}^d$, given by the first $d$ coordinates of $(M^{-t} \circ \mu_{U,s})$, restricts to a one-to-one map $U_{\geq 0} \rightarrow P(s)$. We shall therefore describe the restriction of $\tilde{\mu}$ to each of the toric boundary strata of $U$. 

\smallskip

Each face $Q$ of the polytope $P(s) = s_1P_1 + \cdots + s_kP_k$ corresponds to a torus orbit $O_Q \subset X_\Sigma$. %The affine open subset $\mathcal{U} \subset X_\Sigma$ is isomorphic to $U$ (Theorem \ref{thm: main with polys}), and it is decomposed as $\mathcal{U} = \bigsqcup_{Q \preceq P} O_Q \cap \mathcal{U}$, where the disjoint union runs over all faces $Q$ of $P$. 
Fix a face $Q$ and let $w \in \mathbb{Z}^d$ be a weight vector which ``exposes'' $Q$, meaning that 
\[ Q \, = \, \{ m \in P \, : \, w \cdot m \, = \, \min_{m' \in P} w \cdot m' \}.\]
Equivalently, $w$ is a lattice point contained in the relative interior of the cone of $\Sigma$ corresponding to $Q$. We write $P_i^w$ for the face of $P_i$ exposed by $w$, and we have $Q = s_1 \, P_1^w + \cdots + s_k \, P_k^w$. Each point $p \in O_Q$ is obtained as a one-parameter limit of the following form: 
\[ p \, = \, \lim_{\varepsilon \to 0} \, \varepsilon^w \cdot t \, = \, \lim_{\varepsilon \to 0} \,  (\varepsilon^{w_1} \, t_1, \ldots, \varepsilon^{w_d} \, t_d). \]
Writing $f_i^w(t) = \sum_{m \in P_i^w} c_{i,m} t^m$, and assuming that $p \in \mathcal{U} \cap O_Q$, we have that  
\[ \mu_i(\varphi(p)) \, = \, \lim_{\varepsilon \rightarrow 0} \mu_i(\varepsilon^w \cdot t) \, = \,  \lim_{\varepsilon \rightarrow 0} \sum_{m \in P_i \cap \mathbb{Z}^d} \frac{c_{i,m} \, \varepsilon^{w \cdot m} \,  t^m}{f_i(\varepsilon^w \cdot t)} \, m \, = \, \sum_{m \in P_i^w \cap \mathbb{Z}^d} \frac{c_{i,m} \, t^m}{f_i^w(t)} \, m. \]
Here $\mathcal{U}$ is as in Theorem \ref{thm: main with polys}, and $p \in \mathcal{U} \cap O_Q$ ensures that the denominator does not vanish. Choosing coordinates $\tilde{t} = (\tilde t_1, \ldots, \tilde t_{\dim Q})$ on $\mathcal{U} \cap O_Q \simeq (\mathbb{C}^*)^{\dim Q} \setminus V(f_1^w \cdots f_k^w)$, we see that the restriction of $\tilde{\mu}$ to $\varphi(\mathcal{U} \cap O_Q)$ is again of the form $\tilde t \mapsto s_1 \mu_1^Q(\tilde t) + \cdots + s_k \mu_k^Q (\tilde t)$, where $\mu_i^Q$ is the moment map associated with the positive Laurent polynomial $f_i^w$. By \cite[Lemma 2.7]{MatsubaraHeoMizeraTelen2023EulerIntegrals}, the map $\tilde \mu_{|\varphi(\mathcal{U} \cap O_Q)}$ restricts to a diffeomorphism $\varphi(\mathcal{U} \cap O_Q) \cap U_{\geq 0} \overset{\simeq}{\longrightarrow} Q$.

\smallskip

The torus orbit stratification $X_\Sigma = \bigsqcup_{Q \preceq P(s)} O_Q$ induces a stratification of the semialgebraic set $U_{\geq 0} = \bigsqcup_{Q \preceq P}\varphi(\mathcal{U} \cap O_Q)  \cap U_{\geq 0}$. Here the disjoint union runs over all faces $Q$ of $P(s)$. The map $\tilde \mu$ identifies each stratum diffeomorphically with the corresponding face of $P(s)$. In that sense, $\tilde{\mu}$ identifies $U_{\geq 0}$ and $P(s)$ as real analytic manifolds with corners. 
%Observe that $\mu_i: \mathbb{R}^d_{>0} \rightarrow \mathbb{R}^d$ is a standard toric moment map associated with the positive Laurent polynomial $f_i$. More precisely, the Laurent polynomial $f_i$ gives an algebraic toric moment map $(X_{\Sigma_{P_i}})_{\geq 0} \rightarrow P_i$ as in \cite[\S 4.2]{Fulton1993}. We interpret $\mu_i$ as the pullback of this map along the surjective toric morphism $X_\Sigma \rightarrow X_{\Sigma_{P_i}}$, induced by the fact that $\Sigma$ is a refinement of $\Sigma_{P_i}$ \cite[p.~130]{CoxLittleSchenckToric}. In particular, $\mu_i$ is well-defined on $(X_\Sigma)_{\geq 0} \simeq U_{\geq 0}$ and $\mu_i((X_\Sigma)_{\geq 0}) = P_i$. 
\end{proof}

\begin{example}
    Consider the matrix $E = \left(\begin{smallmatrix}
        2 & 1 \\ 1 & 1
    \end{smallmatrix}\right)$. The moment map corresponding to the positive chart of $\P^1 \times \P^1$ obtained in Example \ref{ex: squares} is given by the map $\mu_{U,s}: U \to \C^4$:
    \[
(u_1,u_2,u_3,u_4) \, \mapsto \, \begin{pmatrix}
2s_1 u_1^2 u_2 + s_2 u_1 u_2
+ 2s_1 u_1^2 u_4 + s_2 u_1 u_4
+ 2s_1 u_1 u_2 u_3
+ 2s_1 u_1 u_3 u_4 \\
s_1 u_1^2 u_2 + s_2 u_1 u_2
+ 2s_1 u_1 u_2 u_3
+ s_1 u_2 u_3^2 + s_2 u_2 u_3 \\
2s_1 u_1 u_2 u_3
+ 2s_1 u_1 u_3 u_4
+ 2s_1 u_2 u_3^2 + s_2 u_2 u_3
+ 2s_1 u_3^2 u_4 + s_2 u_3 u_4 \\
s_1 u_1^2 u_4 + s_2 u_1 u_4
+ 2s_1 u_1 u_3 u_4
+ s_1 u_3^2 u_4 + s_2 u_3 u_4
\end{pmatrix}. \qedhere
\]

\end{example}

\medskip

We end the section by describing the fibers of the moment map $\mu_{U,s}$ as the solutions to a set of critical point equations, called the \emph{scattering equations} in the physics literature \cite[Section 5.3]{LamModuli}. 
Let $x = (x_1, \ldots, x_n) \in \mathbb{C}^n$ be a tuple of complex exponents. We compute the critical points of the multivalued function $\log(u^x)$ on $U^\circ$, where $u^x = \prod_{\rho \in \Sigma(1)} u_\rho^{x_\rho} = u_1^{x_1} \cdots u_n^{x_n}$. Using Lagrange multipliers $s_1, \ldots, s_k$ standing with the $k$ equations for $U^\circ$ from Proposition \ref{prop:eqsfromhom}, the critical points are determined by the vanishing of the $n + k$ partial derivatives of 
\[ {\cal L} \, = \, \sum_{i = 1}^n x_i \, \log u_i - \sum_{j = 1}^k s_j \, (f_j^h(u) -1).\]
The $n$ equations $u_i \frac{\partial}{\partial u_i} {\cal L} = 0$ can conveniently be written as $\mu_{U,s}(u) = x$, where $\mu_{U,s}$ is our moment map from \eqref{eq:muYs}. The $k$ equations $\frac{\partial}{\partial s_j} {\cal L} = 0$ are written as $u \in U^\circ$. We now eliminate the Lagrange multipliers $s$ by observing that the image of $\mu_{U,s} : U \rightarrow \mathbb{C}^n$ is contained in the $d$-plane $K^t x = s$ by Theorem \ref{thm:momentmap}. We arrive at the following set of \emph{scattering equations}: 
\begin{equation}  \label{eq:scatteqs} x \, = \, \mu_{U, K^t x}(u) \quad \text{and} \quad u \in U^\circ. \end{equation}
For generic $x \in \mathbb{C}^n$, the scheme defined by these equations consists of $(-1)^d \cdot \chi(U^\circ)$ reduced points \cite[Theorem 1]{Huh2013}. Here $\chi(\cdot)$ is the topological Euler characteristic. 

\begin{example}
The scattering equations for $U$ from Example \ref{ex: pentagon intro} are read from the moment map in Example \ref{ex:pentagonmoment}, with $(s_1,s_2,s_3)$ replaced by $K^t x = (x_1-x_2+x_3, x_3-x_4+x_5, x_2-x_3+x_4)$:
\begin{align*}
x_1 &\, = \, (x_1 - x_2 + x_3) u_1 + (x_2 - x_3 + x_4) u_1 u_2 \\
x_2 &\, = \, (x_3 - x_4 + x_5) u_2 u_3
+ (x_2 - x_3 + x_4) u_2 u_3 u_4
+ (x_2 - x_3 + x_4) u_1 u_2 \\
x_3 &\, =\, (x_1 - x_2 + x_3) u_3 u_4
+ (x_3 - x_4 + x_5) u_2 u_3
+ (x_2 - x_3 + x_4) u_2 u_3 u_4 \\
x_4 &\, = \, (x_1 - x_2 + x_3) u_3 u_4
+ (x_2 - x_3 + x_4) u_4 u_5
+ (x_2 - x_3 + x_4) u_2 u_3 u_4 \\
x_5 &\, = \, (x_3 - x_4 + x_5) u_5
+ (x_2 - x_3 + x_4) u_4 u_5.
\end{align*}
For generic choices of $x$, these equations have $2$ solutions $u(x)$ with nonzero coordinates. This matches the topological Euler characteristic of the very affine variety $U^\circ \simeq M_{0,5}$. %To see that $(-1)^2 \chi(U^\circ) = 2$, notice that $U^\circ \simeq (\mathbb{C}^*)^2 \setminus V((1+t_1)(1+t_2)(1+t_2+t_1t_2)) \simeq (\mathbb{C}^*)^2 \setminus V((u_1+u_2)(1+u_2)(1+u_1+u_2))$, where the last isomorphism is obtained by applying the isomorphism of tori $(t_1,t_2) \mapsto (t_1t_2, t_2)$. Hence, $U^\circ$
\end{example}

In summary, the scattering equations define $(-1)^d \cdot \chi(U^\circ)$ critical points of the monomial $u^x$ on $U^\circ$. Solving the scattering equations amounts to computing the fiber of the algebraic moment map $\mu_{U,K^tx}$ over $x$. This interpretation has been used in particle physics to express certain \emph{scattering amplitudes} as volumes of convex polytopes \cite[Claim 4]{ArkaniHamed2021}.

\paragraph{Acknowledgements.} 
We thank Federico Ardila Mantilla, Hadleigh Frost, Bernd Sturmfels, and Hugh Thomas for useful conversations. We thank Prashanth Raman for sharing examples of binary geometries to test our construction. The authors were partially supported by the European Union (ERC, UNIVERSE PLUS, 101118787)\footnote{\tiny Views and opinions expressed are, however, those of the author(s) only and do not necessarily reflect those of the European Union or the European Research Council Executive Agency. Neither the European Union nor the granting authority can be held responsible for them.}.

\paragraph{Statements and Declarations.} There are no competing interests to disclose.

\small
\bibliographystyle{alphaurl}
\bibliography{refs}

\begin{comment}
\noindent{\bf Authors' addresses:}
\smallskip

\noindent Veronica Calvo Cortes, MPI-MiS Leipzig
\hfill {\tt veronica.calvo@mis.mpg.de}

\noindent Simon Telen, MPI-MiS Leipzig
\hfill {\tt simon.telen@mis.mpg.de}
\end{comment}
\end{document}